\documentclass[11pt]{amsart}

\usepackage{amscd,amssymb,amsopn,amsmath,amsthm,mathrsfs,graphics,amsfonts,enumerate,verbatim,calc
}

\usepackage[all]{xy}

\usepackage{color}


\usepackage[OT2,OT1]{fontenc}
\newcommand\cyr{%
\renewcommand\rmdefault{wncyr}%
\renewcommand\sfdefault{wncyss}%
\renewcommand\encodingdefault{OT2}%
\normalfont
\selectfont}
\DeclareTextFontCommand{\textcyr}{\cyr}

\usepackage{amssymb,amsmath}

\DeclareFontFamily{OT1}{rsfs}{}
\DeclareFontShape{OT1}{rsfs}{n}{it}{<-> rsfs10}{}
\DeclareMathAlphabet{\mathscr}{OT1}{rsfs}{n}{it}

\topmargin=0in
\oddsidemargin=0in
\evensidemargin=0in
\textwidth=6.5in
\textheight=8.5in

\numberwithin{equation}{section}
\hyphenation{semi-stable}

\newtheorem{theorem}{Theorem}[section]
\newtheorem{lemma}[theorem]{Lemma}
\newtheorem{proposition}[theorem]{Proposition}
\newtheorem{corollary}[theorem]{Corollary}

\newtheorem{question}{Question}

\theoremstyle{definition}
\newtheorem{definition}[theorem]{Definition}
\newtheorem{remark}[theorem]{Remark}
\theoremstyle{remark}

\newtheorem{example}[theorem]{Example}

\newtheorem{notation}[theorem]{Notation}
\newtheorem{acknowledgement}{Acknowledgement}

\newcommand{\depth}{\operatorname{depth}}

\newcommand{\fm}{\frak{m}}

\newcommand{\fq}{\frak{q}}



\begin{document}
\title[On the index of reducibility of parameter ideals]{On the index of reducibility of parameter ideals:\\ The stable and limit values}

\author[N.T. Cuong]{Nguyen Tu Cuong}
\address{Institute of Mathematics, Vietnam Academy of Science and Technology, 18 Hoang Quoc Viet Road, 10307
Hanoi, Vietnam}
\email{ntcuong@math.ac.vn}


\author[P.H. Quy]{Pham Hung Quy}

\address{Department of Mathematics, FPT University, Hanoi, Vietnam, and Thang Long Institute of Mathematics and Applied Sciences, Thang Long University, Hanoi, Vietnam}
\email{quyph@fe.edu.vn}

\thanks{2010 {\em Mathematics Subject Classification\/}: 13H10, 13D45.\\
N.T. Cuong and P.H. Quy are partially supported by a fund of Vietnam National Foundation for Science
and Technology Development (NAFOSTED) under grant number 101.04-2017.10.}

\keywords{Index of reducibility, local cohomology, generalized Cohen-Macaulay.}

\maketitle
\begin{center}
{\it On the occasion of Le Van Thiem's centenary}
\end{center}
\begin{abstract} Let $(R, \fm)$ be a Noetherian local ring and $M$ a finitely generated $R$-module of dimension $d$. A famous result of Northcott says that if $M$ is Cohen-Macaulay, then the index of reducibility of parameter ideals on $M$ is an invariant of the module. The aim of this paper is to extend Northcott's theorem for any finitely generated $R$-module. We call this invariant the stable value of the indices of reducibility of parameter ideals of $M$. We also introduce the limit value of the indices of reducibility of parameter ideals of $M$.
\end{abstract}

\section{Introduction}
Throughout this paper, let $(R, \fm, k)$ be a Noetherian local ring and $M$ a finitely generated $R$-module of dimension $d$. A submodule $N$ of $M$ is called an {\it irreducible submodule} if $N$ can not be written as an intersection of two properly larger submodules of $M$. The number of irreducible components of an irredundant irreducible decomposition of $N$, which is independent of the choice of the decomposition  by E. Noether \cite{N21}, is called the {\it index of reducibility} of $N$ and denoted by $\mathrm{ir}_M(N)$. Let $\fq$ be a parameter ideal of $M$ then {\it the index of reducibility} of $\fq$
on $M$ is the index of reducibility of $\fq M$ and denoted by $\mathrm{ir}_M(\fq)$. We have $\mathrm{ir}_M(\fq) = \dim_k \mathrm{Soc}(M/\fq M)$. Let $\widehat{R}$ and $\widehat{M}$ are the $\fm$-adic completions of $R$ and $M$, respectively. It is clear that $\mathrm{ir}_M(\fq) = \mathrm{ir}_{\widehat{M}}(\fq \widehat{R})$. Therefore, without loss of generality we always assume that $R$ is a homomorphic image of a Cohen-Macaulay local ring.

If $M$ is Cohen-Macaulay, then Northcott proved that $\mathrm{ir}_M(\fq)$ is an invariant of the module (cf. \cite{No57}) and it is called the {\it Cohen-Macaulay type} of $M$. More precisely, we have $\mathrm{ir}_M(\fq) = \dim_k \mathrm{Soc}(H^d_{\fm}(M))$ for all parameter ideals $\fq$, where $H^i_{\fm}(M)$ is the $i$-th local cohomology module of $M$. After that several researchers tried to extend Northcott's result for other classes of modules. For example Goto et al. studied the problem for Buchsbaum modules in \cite{GS03, GS84}; Truong and the authors extended the results for generalized Cohen-Macaulay modules in \cite{CQ11, CT08}. In more details, if $M$ is generalized Cohen-Macaulay it is proved in \cite[Corollary 4.3]{CQ11} and \cite[Theorem 1.1]{CT08} that $\mathrm{ir}_M(\fq)$ does not depend of the choice of parameter ideals contained in a large enough power of $\frak m$, and we have
$$\mathrm{ir}_M(\frak q) = \sum_{i=0}^d \binom{d}{i} \dim_k \mathrm{Soc}(H^i_{\fm}(M)). $$
Recently, the above results were extended for the classes of sequentially (generalized) Cohen-Macaulay modules and for parameter ideals generated by {\it good} systems of parameters in \cite{Q12,T13}. The main aim of this paper is to generalize Northcott's result for an arbitrary finitely generated $R$-module $M$. We prove that $\mathrm{ir}_M(\fq)$ is independent of the choice of parameter ideals which generated by certain systems of parameters, namely $C$-system of parameters (see Section 2 for details).
Our result is as follows.

\begin{theorem}[Theorem \ref{T3.2}]
Let $\underline{x} = x_1, ..., x_d$ be a $C$-system of parameters of $M$. Then the index of reducibility of $(\underline{x})$ on $M$, $\mathrm{ir}_M(\underline{x})$, is independent of the choice of $\underline{x}$.
\end{theorem}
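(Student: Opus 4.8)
The plan is to prove the sharper statement that for every $C$-system of parameters $\underline{x}=x_1,\dots,x_d$ one has the closed formula
$$\mathrm{ir}_M(\underline{x})=\sum_{i=0}^{d}\binom{d}{i}\dim_k\mathrm{Soc}\bigl(H^i_{\fm}(M)\bigr),$$
whose right-hand side is manifestly independent of $\underline{x}$, and so implies the theorem. Note first that the right-hand side is a \emph{finite} number: each $H^i_{\fm}(M)$ is Artinian, and an Artinian module has a finite-dimensional socle. This formula recovers Northcott's theorem when $M$ is Cohen--Macaulay and the generalized Cohen--Macaulay formula quoted in the introduction. Since $\mathrm{ir}_M(\underline{x})=\dim_k\mathrm{Soc}(M/\underline{x}M)$, the entire task is to compute $\dim_k\mathrm{Soc}(M/\underline{x}M)$.

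I would argue by induction on $d=\dim M$. When $d=0$ the system is empty, $M=H^0_{\fm}(M)$, and both sides equal $\dim_k\mathrm{Soc}(M)$. For the inductive step set $\bar M=M/x_1M$ and $\underline{x}'=x_2,\dots,x_d$. The first point is essentially formal: $M/\underline{x}M=\bar M/\underline{x}'\bar M$, so $\mathrm{ir}_M(\underline{x})=\mathrm{ir}_{\bar M}(\underline{x}')$; and the structural input from Section~2 is that $\underline{x}'$ is again a $C$-system of parameters of $\bar M$, so the induction hypothesis applies to $\bar M$. Thus all the content is concentrated in the single socle identity
$$\dim_k\mathrm{Soc}\bigl(H^i_{\fm}(\bar M)\bigr)=\dim_k\mathrm{Soc}\bigl(H^i_{\fm}(M)\bigr)+\dim_k\mathrm{Soc}\bigl(H^{i+1}_{\fm}(M)\bigr)$$
for $0\le i\le d-1$: granting it, I substitute it into the induction hypothesis $\mathrm{ir}_{\bar M}(\underline{x}')=\sum_{i=0}^{d-1}\binom{d-1}{i}\dim_k\mathrm{Soc}(H^i_{\fm}(\bar M))$ and collapse the result with Pascal's rule $\binom{d-1}{i}+\binom{d-1}{i-1}=\binom{d}{i}$ to land on the claimed formula for $M$.

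To establish the socle identity I would feed the short exact sequence $0\to M/(0:_Mx_1)\to M\to\bar M\to 0$ (the first map being multiplication by $x_1$) into the long exact sequence of local cohomology. Using the $C$-condition to control $(0:_Mx_1)$, one identifies the relevant maps $H^i_{\fm}(M)\to H^i_{\fm}(M)$ with multiplication by $x_1$, and extracts short exact sequences
$$0\to H^i_{\fm}(M)/x_1H^i_{\fm}(M)\to H^i_{\fm}(\bar M)\to\bigl(0:_{H^{i+1}_{\fm}(M)}x_1\bigr)\to 0.$$
Applying $\Hom(k,-)$ and using that $x_1\in\fm$ annihilates every socle (so that $\mathrm{Soc}(0:_{H^{i+1}}x_1)=\mathrm{Soc}(H^{i+1}_{\fm}(M))$) should convert these into the desired additivity of socle dimensions.

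The main obstacle is precisely this last conversion, i.e.\ the \emph{additivity of socle dimension} across the above sequences. This has two faces: that passing to $H^i_{\fm}(M)/x_1H^i_{\fm}(M)$ creates no new socle (so its socle dimension is still $\dim_k\mathrm{Soc}(H^i_{\fm}(M))$), and that the connecting map is surjective on socles. Already for $d=1$ this amounts to the surjectivity of $\mathrm{Soc}(M/x_1M)\to\mathrm{Soc}(\bar M/x_1\bar M)$, which fails for a general parameter: the obstruction lives in $\Ext^1\bigl(k,H^0_{\fm}(M)\bigr)$, and in higher degrees in the corresponding $\Ext^1$ groups against the lower local cohomologies. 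The defining properties of a $C$-system of parameters are exactly what force these obstruction classes to vanish. Verifying this simultaneously in all cohomological degrees, and checking that the $C$-property both yields these splittings and is inherited by $\bar M$ under reduction modulo $x_1$, is the technical heart of the argument.
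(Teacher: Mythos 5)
Your proposed closed formula
$\mathrm{ir}_M(\underline{x})=\sum_{i=0}^{d}\binom{d}{i}\dim_k\mathrm{Soc}(H^i_{\fm}(M))$
is false for a general module $M$; it is the formula for \emph{generalized} Cohen--Macaulay modules only. The paper's own examples refute it. For $R=k[[X,Y,Z]]/(XY,XZ)$ (Section 4) one has $\dim_k\mathrm{Soc}(H^1_{\fm}(R))=\dim_k\mathrm{Soc}(H^2_{\fm}(R))=1$, so your formula predicts $\binom{2}{1}+\binom{2}{2}=3$, whereas $R$ is sequentially Cohen--Macaulay and Theorem \ref{T3.4}(ii) gives $\mathcal{N}_R(R)=2$. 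Likewise, for $R=T/((a,b)\cap(c,d))$ in Section 5 your formula gives $3\cdot 1+2=5$ while Theorem \ref{T5.3} gives $\mathcal{N}_R(R)=4$. The error enters exactly at the socle identity you isolate as the heart of the argument: the correct splitting (Theorem \ref{T2.11}, Lemma \ref{L3.1}) is
$$\dim_k\mathrm{Soc}(H^i_{\fm}(M/x_1M))=\dim_k\mathrm{Soc}(H^i_{\fm}(M))+\dim_k\mathrm{Soc}\bigl(H^{i+1}_{\fm}(M/U_M(0))\bigr),$$
with $U_M(0)$ the unmixed component, not $\dim_k\mathrm{Soc}(H^{i+1}_{\fm}(M))$. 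Your short exact sequence already shows where this comes from: for $x_1\in\fb(M)$ one has $0:_Mx_1=U_M(0)$, so the sequence is $0\to M/U_M(0)\xrightarrow{x_1}M\to\bar M\to 0$ and the connecting terms live in $H^{i+1}_{\fm}(M/U_M(0))$. When $U_M(0)$ has positive dimension these groups genuinely differ from $H^{i+1}_{\fm}(M)$, the Pascal-rule telescoping collapses, and no closed formula in the socles of the $H^i_{\fm}(M)$ alone can hold (that such a formula requires the $S_2$-ification already in dimension $3$ is the whole point of Section 5).

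The paper's actual proof takes a completely different, formula-free route: it never computes $\mathrm{ir}_M(\underline{x})$, but shows directly that any two $C$-systems of parameters $\underline{x}$ and $\underline{y}$ give the same value. One picks a common element $z\in\fb(M/(x_2,\dots,x_d)M)^3\cap\fb(M/(y_2,\dots,y_d)M)^3$ (possible by Remark \ref{R2.2}), uses the permanence properties of Lemma \ref{L2.15} to see that $z,x_2,\dots,x_d$ and $z,y_2,\dots,y_d$ are again $C$-systems of parameters, and then applies the induction hypothesis twice --- once to $M/x_dM$ (to replace $x_1$ by $z$) and once to $M/zM$ (to pass from the $x_i$'s to the $y_i$'s). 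Your reduction $\mathrm{ir}_M(\underline{x})=\mathrm{ir}_{M/x_1M}(\underline{x}')$ and the inheritance of the $C$-property are sound ingredients, but the invariance cannot be obtained by evaluating both sides against a universal formula; you would need to restructure the argument along these comparison lines, or at minimum track the unmixed components of all the successive quotients, which is precisely what your telescoping suppresses.
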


We call the above invariant the {\it stable value} of the indices of reducibility of parameter ideals. This invariant agrees with previous invariants of the classes of (sequentially) (generalized) Cohen-Macaulay modules, and can be used to characterize these classes of rings (cf. Theorem \ref{T3.4}). We also introduce another invariant, say the {\it limit value} of the indices of reducibility of parameter ideals. The limit value can be though of as the infimum limit of the indices of reducibility of parameter ideals. It should be noted that we can not expect to have the supremum limit of the indices of reducibility of parameter ideals in general by \cite[Example 3.9]{GS84}.

This paper is organized as follows. In the next section we collect known results about the index of reducibility of parameter ideals and recall the notion of $C$-system of parameters. In Section 3, we study the stable value. The existence of the limit value is shown in Section 4. In the last section we compute the stable value for unmixed modules of dimension three and of depth two.

\begin{acknowledgement} The second author is grateful to Professor Shiro Goto for his comments which lead the main result of Section 5. The authors would like to thank the anonymous referee for helpful comments on the earlier version.
\end{acknowledgement}

\section{Preliminaries}
We start with the notion of annihilator of local cohomology which plays the key role of this paper.
\begin{notation} \rm Let $(R, \frak m)$ be a Noetherian local ring and $M$ a finitely generated $R$-module of dimension $d>0$.
\begin{enumerate}[{(i)}]
\item For all $i < d$ we set  $\frak a_i(M) =
\mathrm{Ann}H^{i}_\mathfrak{m}(M)$, and $\frak a(M) = \frak
a_0(M)...\frak a_{d-1}(M)$.
\item Put $\mathfrak{b}(M) = \bigcap_{\underline{x};i=1}^d
\mathrm{Ann}(0:x_i)_{M/(x_1,...,x_{i-1})M}$ where $\underline{x} =
x_1, ..., x_d$ runs over all systems of parameters of $M$.
\end{enumerate}
\end{notation}
\begin{remark} \label{R2.2}\rm
\begin{enumerate}[{(i)}]
\item Schenzel (cf.  \cite[Satz 2.4.5]{Sch82}) proved that
$$\mathfrak{a}(M) \subseteq \mathfrak{b}(M) \subseteq \mathfrak{a}_0(M) \cap \cdots \cap \mathfrak{a}_{d-1}(M).$$
\item If $R$ is a homomorphic image of a Cohen-Macaulay local ring, then $\dim R/\mathfrak{a}_i(M) \leq
i$ for all $i< d$. Furthermore, $\dim R/\mathfrak{a}_i(M) = i$ iff there exists $\frak p
\in \mathrm{Ass}M$ such that $\dim R/\frak p = i$ (see \cite[Theorem
8.1.1]{BH98}). Therefore we can choose a parameter element $x \in \frak a(M) \subseteq \frak b(M)$ of $M$.
\end{enumerate}
\end{remark}

We next mention the main object of this paper.
\begin{definition}\rm
Let $\frak q$ be a parameter ideal of $M$. The {\it index of
reducibility} of $\frak q$ on $M$ is the number of irreducible
components appear in an irredundant irreducible decomposition of
$\frak qM$, and denoted by $\mathrm{ir}_M(\frak q)$.
\end{definition}
\begin{remark}\label{R2.4} \rm
\begin{enumerate}[{(i)}]
\item It is well known that $\mathrm{ir}_M(\frak q) = \dim_{R/\frak
m}\mathrm{Soc}(M/\frak qM)$ (cf. \cite[Section 3]{No57}), where $\mathrm{Soc}(N) = 0:_N\frak m
\cong \mathrm{Hom}(R/\frak m, N)$ for an arbitrary $R$-module $N$.
\item A classical result of Northcott \cite{No57} says that the index of reducibility of parameter ideal of a Cohen-Macaulay module is an invariant of the module. In fact, if $M$ is Cohen-Macaulay, then $\mathrm{ir}_M(\frak q) = \dim_{R/\frak
m}\mathrm{Soc}(H^d_{\frak m}(M))$ for all parameter ideals $\frak q$.
\item Goto and Sakurai extended the Northcott result for Buchsbaum modules in \cite{GS03}. Truong and the fisrt author generalized Goto-Sakurai's result for generalized Cohen-Macaulay modules in \cite{CT08} (see also \cite[Corollary 4.3]{CQ11}). Let $M$ be a generalized Cohen-Macaulay module and $n_0$ a positive integer such that $\frak m^{n_0}H^i_{\frak
m}(M) = 0$ for all $i = 0,...,d-1$. Then for all parameter ideals $\frak
q$ contained in $\frak m^{2n_0}$ we have
$$\mathrm{ir}_M(\frak q) = \sum_{i=0}^{d} \binom{d}{i} \dim_{R/\frak m}\mathrm{Soc}(H^i_{\frak m}(M)).$$
\item In general, the index of reducibility of parameter ideals are not bounded above (\cite[Example 3.9]{GS84}). However, if $M$ is generalized Cohen-Macaulay, then Goto and Suzuki proved that
$$\mathrm{ir}_M(\frak q) \leq \sum_{i=0}^{d-1} \binom{d}{i} \ell(H^i_{\frak m}(M)) +
\dim_{R/\frak m}\mathrm{Soc}(H^d_{\frak m}(M))$$ for all parameter
ideals $\frak q$ of $M$ (cf. \cite[Theorem 2.1]{GS84}). Recently, the second author extended this result for the class of module with $\dim R/\frak a(M) \le 1$ in \cite{Q13}.
\end{enumerate}
\end{remark}
Next we recall briefly some basic facts about filtrations satisfying
the dimension condition and good system of parameters (cf. \cite{CC07,CN03}).
\begin{definition}\rm
\end{definition}
\begin{enumerate}[{(i)}]
\item We say that a finite filtration
$$\mathcal{F}: M_0 \subseteq M_1 \subseteq \cdots
\subseteq M_t = M$$ of submodules of $M$ satisfies the {\it
dimension condition} if  $ \dim M_0 < \dim M_1 < \cdots < \dim M_t$,
and then $\mathcal{F}$ is said to have length $t$. For
convenience, we always consider that $\dim M_1 > 0$.
\item A filtration of submodules $\mathcal{D} : D_0 \subseteq D_1 \subseteq
\cdots \subseteq D_t = M$ of $M$ is called the {\it dimension
filtration}
of $M$ if the following two conditions are satisfied:\\
(a) $D_{i-1}$ is the largest submodule of $D_i$ with $\dim
D_{i-1} < \dim D_i$ for $i = t, t-1, ...,1$.\\
(b) $D_0 = H^0_{\frak m}(M)$ is the 0-th local cohomology module of
$M$ with respect to the maximal ideal $\frak m$.
\end{enumerate}
\begin{definition}\rm Let $\mathcal{F}: M_0 \subseteq M_1 \subseteq \cdots
\subseteq M_t = M$ be a filtration satisfying the dimension
condition. Put $d_i = \dim M_i$. A system of parameters. $\underline{x} =
x_1,...,x_d$ of $M$ is called a {\it good system of parameters with
respect to} $\mathcal{F}$ if $M_i \cap (x_{d_i+1},...,x_d)M = 0$ for
$i= 0, 1, ..., t-1$. A good system of parameters with respect to the dimension
filtration is simply called a good system of parameters of $M$.
\end{definition}
\begin{definition}\rm
Let $\mathcal{F}: M_0 \subseteq M_1 \subseteq \cdots \subseteq M_t =
M$ be a filtration of submodules of $M$. Then, $\mathcal{F}$ is
called a {\it (generalized) Cohen-Macaulay filtration} if it satisfies
the dimension condition, $\dim M_0 = 0$ and $M_1/M_0,...,
M_t/M_{t-1}$ are (generalized) Cohen-Macaulay modules. Moreover, $M$
is called a {\it sequentially (generalized) Cohen-Macaulay module} if
it has a (generalized) Cohen-Macaulay filtration.
\end{definition}
It is clear that a (generalized) Cohen-Macaulay module is a sequentially (generalized) Cohen-Macaulay module with the (generalized) Cohen-Macaulay filtration $\mathcal{F}: 0 \subseteq M$. Truong in \cite{T13} and the second author in \cite{Q12} studied the stable value of the indices of reducibility of good parameter ideals of sequentially (generalized) Cohen-Macaulay modules.
\begin{theorem}[\cite{Q12}, Theorem 3.12] \label{T2.8} Let $M$ be a sequentially generalized Cohen-Macaulay module with a generalized Cohen-Macaulay filtration $\mathcal{F}: M_0 \subseteq M_1 \subseteq \cdots \subseteq M_t =
M$, $d_i = \dim M_i$ for all $i = 0, \ldots, t$. Then for every good system of parameters $\underline{x} = x_1,...,x_d$ of $M$ with
respect to $\mathcal{F}$ contained in a large enough power of $\frak m$, the index of reducibility of
$(\underline{x})$ on $M$ is independent of the choice of
$\underline{x}$ and
$$ \mathrm{ir}_M(\underline{x}) = \dim_{R/\frak m}\mathrm{Soc}(H^{0}_\mathfrak{m}(M)) +
\sum_{i=0}^{t-1}\sum_{j=1}^{d_{i+1}} \bigg(\binom{d_{i+1}}{j} -
\binom{d_{i}}{j} \bigg) \dim_{R/\frak
m}\mathrm{Soc}(H^{j}_\mathfrak{m}(M/M_i)).$$
\end{theorem}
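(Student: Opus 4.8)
The plan is to compute $\mathrm{ir}_M(\underline{x}) = \dim_k \mathrm{Soc}(M/\fq M)$, where $\fq = (\underline{x})$, by reducing to the generalized Cohen--Macaulay case of Remark \ref{R2.4}(iii) along the filtration $\mathcal{F}$, exploiting the defining property of a good system of parameters. Note first that the right-hand side of the asserted formula involves only socles of local cohomology modules, which are intrinsic to $M$ and independent of $\underline{x}$; hence the claimed independence is automatic once the displayed identity is proved, and all the work lies in establishing that identity. I would argue by induction on the length $t$ of $\mathcal{F}$. When $t = 1$ the module $M$ is an extension of the finite-length module $M_0$ by the generalized Cohen--Macaulay module $M/M_0$ and is therefore itself generalized Cohen--Macaulay; using $\binom{d_0}{j} = 0$ for $j \ge 1$ together with the isomorphisms $H^j_\fm(M/M_0) \cong H^j_\fm(M)$ for $j \ge 1$, the right-hand side collapses to $\sum_{j=0}^d \binom{d}{j} \dim_k \mathrm{Soc}(H^j_\fm(M))$, which is exactly Remark \ref{R2.4}(iii).

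For the inductive step I would use the short exact sequence
$$0 \to M_{t-1} \to M \to M/M_{t-1} \to 0,$$
in which $M/M_{t-1} = M_t/M_{t-1}$ is generalized Cohen--Macaulay of dimension $d$. The submodule $M_{t-1}$ inherits the filtration $M_0 \subseteq \cdots \subseteq M_{t-1}$ of length $t-1$, and $x_1, \ldots, x_{d_{t-1}}$ is a good system of parameters for it: indeed, for $i < t-1$ one has $M_i \cap (x_{d_i+1}, \ldots, x_{d_{t-1}})M_{t-1} \subseteq M_i \cap (x_{d_i+1}, \ldots, x_d)M = 0$. Thus the inductive hypothesis applies to $M_{t-1}$ and expresses $\mathrm{ir}_{M_{t-1}}(x_1, \ldots, x_{d_{t-1}})$ through the lower part of the filtration.

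The crux is a lemma showing that, because $\underline{x}$ is good with respect to $\mathcal{F}$ and is chosen in a sufficiently high power of $\fm$, the passage to $M/\fq M$ is compatible with the above sequence. Concretely, the good property $M_{t-1} \cap (x_{d_{t-1}+1}, \ldots, x_d)M = 0$ should force the intersection $\fq M \cap M_{t-1}$ together with the $\Tor_1(M/M_{t-1}, R/\fq)$ and connecting terms in
$$M_{t-1}/\fq M_{t-1} \to M/\fq M \to (M/M_{t-1})/\fq(M/M_{t-1}) \to 0$$
to be controlled, yielding a direct-sum splitting of $\mathrm{Soc}(M/\fq M)$ into a summand coming from the generalized Cohen--Macaulay quotient $M/M_{t-1}$ and a summand coming from $M_{t-1}$. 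The former is evaluated by Remark \ref{R2.4}(iii) applied to $M/M_{t-1}$; the latter must be corrected for the $d - d_{t-1}$ parameters $x_{d_{t-1}+1}, \ldots, x_d$, which by the good property act as an almost-regular sequence on the finite-length local cohomology of the lower part and thereby introduce Koszul binomial factors. Assembling the two contributions, and comparing $H^j_\fm(M/M_i)$ with $H^j_\fm(M_{t-1}/M_i)$ in the range $1 \le j \le d_{t-1}$ via the short exact sequence $0 \to M_{t-1}/M_i \to M/M_i \to M/M_{t-1} \to 0$, one reorganizes the nested binomial coefficients---by a Vandermonde-type identity together with cancellation between consecutive layers---into the telescoped differences $\binom{d_{i+1}}{j} - \binom{d_i}{j}$ attached to $\mathrm{Soc}(H^j_\fm(M/M_i))$.

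The step I expect to be the main obstacle is exactly this control of the socle along the filtration. Since $\mathrm{Soc}(-) = \Hom(k, -)$ is only left exact, socles do not simply add over short exact sequences, and the argument must use both the goodness of $\underline{x}$ and the depth of the power of $\fm$ to annihilate the obstructing $\Tor$ and connecting homomorphisms and so produce a direct-sum splitting of $\mathrm{Soc}(M/\fq M)$ into a summand from $M/M_{t-1}$ and a summand from $M_{t-1}$. Pinning down this decomposition while simultaneously keeping track of how the extra parameters transform the binomial coefficients---so that the cross-layer corrections cancel into the stated differences---is the delicate core; once it is in place, the final combinatorial identity and the reduction to Remark \ref{R2.4}(iii) are routine.
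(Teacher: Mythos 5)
Note first that the paper does not prove Theorem \ref{T2.8} at all: it is quoted from \cite{Q12} (Theorem 3.12 there), so your attempt has to be measured against that source. Your outline gets the base case $t=1$ and the shape of the answer right, but there is a genuine gap exactly where you flag it: the direct-sum splitting of $\mathrm{Soc}(M/\fq M)$ into a piece coming from $M/M_{t-1}$ and a piece coming from $M_{t-1}$ is never established, only described as something the goodness of $\underline{x}$ ``should force.'' Since $\mathrm{Soc}(-)=\Hom(R/\frak m,-)$ is only left exact, the sequence $M_{t-1}/\fq M_{t-1}\to M/\fq M\to (M/M_{t-1})/\fq(M/M_{t-1})\to 0$ gives you neither injectivity on the left nor surjectivity on the right after taking socles; worse, the left-hand term is not even the correct object, because the image of $M_{t-1}$ in $M/\fq M$ is $M_{t-1}/(\fq M\cap M_{t-1})=M_{t-1}/(x_1,\dots,x_{d_{t-1}})M_{t-1}$ by the intersection property of good systems of parameters (\cite{CC07}), which is a proper quotient of $M_{t-1}/\fq M_{t-1}$ in general. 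The ``Koszul binomial correction'' you invoke to convert $\mathrm{ir}_{M_{t-1}}(x_1,\dots,x_{d_{t-1}})$ into the actual contribution of $M_{t-1}$ inside $\mathrm{Soc}(M/\fq M)$ is precisely the hard content of the theorem; it cannot be produced by a formal Vandermonde or telescoping manipulation, and as written the core of the proof is missing.

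The proof in \cite{Q12} also takes a structurally different route: induction on the dimension $d$, not on the length $t$ of the filtration. One cuts by a single parameter $x_d$ lying deep enough in $\frak m$ (in the annihilator ideals controlling the filtration), invokes the splitting theorem for local cohomology (Theorem \ref{T2.11} of the present paper, from \cite{CQ11}) to get $H^i_{\frak m}(M/x_dM)\cong H^i_{\frak m}(M)\oplus H^{i+1}_{\frak m}(M/U_M(0))$ for $i<d-1$ together with the socle identity in top degree, checks that $M/x_dM$ is again sequentially generalized Cohen--Macaulay with the induced filtration and that $x_1,\dots,x_{d-1}$ is a good system of parameters for it, and then recovers the coefficients $\binom{d_{i+1}}{j}-\binom{d_i}{j}$ from Pascal's rule under this one-step descent. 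To complete your argument you would essentially have to reprove that splitting theorem; the short exact sequence $0\to M_{t-1}\to M\to M/M_{t-1}\to 0$ alone, even with $\underline{x}$ in a high power of $\frak m$, does not yield the required decomposition of the socle.
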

\begin{corollary}[\cite{T13}] Let $M$ be a sequentially Cohen-Macaulay module of
dimension $d$. Then there is a positive integer $n$ such that for
every good system of parameters $\underline{x} = x_1,...,x_d$ of $M$ contained in
$\frak m^n$ the index of reducibility $\mathrm{ir}_M(\underline{x})$ is
independent of the choice of $\underline{x}$ and
$$\mathrm{ir}_M(\underline{x}) = \sum_{i=0}^d \dim_{R/\frak m}\mathrm{Soc}(H^{i}_\mathfrak{m}(M)).$$
\end{corollary}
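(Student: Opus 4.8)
The plan is to deduce this from Theorem \ref{T2.8}. First I would observe that a sequentially Cohen-Macaulay module is in particular sequentially generalized Cohen-Macaulay: its dimension filtration $\mathcal{D}: D_0 \subseteq D_1 \subseteq \cdots \subseteq D_t = M$ is a Cohen-Macaulay filtration, so each quotient $C_l := D_l/D_{l-1}$ is Cohen-Macaulay of dimension $d_l := \dim D_l$, with $0 = d_0 < d_1 < \cdots < d_t = d$ and $C_0 = D_0 = H^0_{\frak m}(M)$. Since for a sequentially Cohen-Macaulay module the dimension filtration is the Cohen-Macaulay filtration, a good system of parameters of $M$ is the same as a good system of parameters with respect to $\mathcal{D}$, so applying Theorem \ref{T2.8} with $M_i = D_i$ furnishes a positive integer $n$ such that, for every good system of parameters $\underline{x}$ contained in $\frak m^n$, the value $\mathrm{ir}_M(\underline{x})$ is independent of $\underline{x}$ and equals
$$\dim_{R/\frak m}\mathrm{Soc}(H^0_{\frak m}(M)) + \sum_{i=0}^{t-1}\sum_{j=1}^{d_{i+1}} \left(\binom{d_{i+1}}{j} - \binom{d_i}{j}\right)\dim_{R/\frak m}\mathrm{Soc}(H^j_{\frak m}(M/D_i)).$$
It then remains to simplify the right-hand side.

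The key structural input is the computation of local cohomology for a sequentially Cohen-Macaulay module. Since each $C_l$ is Cohen-Macaulay of dimension $d_l$, one has $H^j_{\frak m}(C_l) = 0$ for $j \neq d_l$. Feeding the short exact sequences $0 \to D_{l-1} \to D_l \to C_l \to 0$ into the long exact sequence of local cohomology and using that the $d_l$ are strictly increasing, I would show by induction on $l$ that
$$H^j_{\frak m}(M) \cong \begin{cases} H^{d_l}_{\frak m}(C_l) & \text{if } j = d_l, \\ 0 & \text{otherwise.} \end{cases}$$
The same argument applied to the induced sequentially Cohen-Macaulay filtration $0 \subseteq D_{i+1}/D_i \subseteq \cdots \subseteq M/D_i$ yields $H^j_{\frak m}(M/D_i) \cong H^{d_l}_{\frak m}(C_l)$ when $j = d_l$ for some $l > i$, and $H^j_{\frak m}(M/D_i) = 0$ otherwise.

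With this in hand the double sum collapses. Because the $d_l$ strictly increase, for a fixed $i$ the only index $l > i$ with $d_l \le d_{i+1}$ is $l = i+1$; hence the inner sum over $1 \le j \le d_{i+1}$ has a single nonzero term, at $j = d_{i+1}$, with coefficient $\binom{d_{i+1}}{d_{i+1}} - \binom{d_i}{d_{i+1}} = 1 - 0 = 1$ and value $\dim_{R/\frak m}\mathrm{Soc}(H^{d_{i+1}}_{\frak m}(C_{i+1}))$. Together with the leading term $\dim_{R/\frak m}\mathrm{Soc}(H^0_{\frak m}(M)) = \dim_{R/\frak m}\mathrm{Soc}(H^{d_0}_{\frak m}(C_0))$, reindexing by $l = i+1$ gives
$$\mathrm{ir}_M(\underline{x}) = \sum_{l=0}^{t} \dim_{R/\frak m}\mathrm{Soc}(H^{d_l}_{\frak m}(C_l)) = \sum_{l=0}^{t} \dim_{R/\frak m}\mathrm{Soc}(H^{d_l}_{\frak m}(M)),$$
the second equality being the structural isomorphism. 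Finally, since $H^j_{\frak m}(M) = 0$ for $j \notin \{d_0, \ldots, d_t\}$, this sum equals $\sum_{i=0}^d \dim_{R/\frak m}\mathrm{Soc}(H^i_{\frak m}(M))$, as claimed.

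I expect the main obstacle to be the careful bookkeeping in the local cohomology computation — in particular verifying that the boundary degrees $j = d_l$ and $j = d_l - 1$ behave as stated when chaining the long exact sequences, so that no cross terms survive and the isomorphisms $H^{d_l}_{\frak m}(M) \cong H^{d_l}_{\frak m}(C_l)$ hold on the nose. Once that structural fact and the collapse of the binomial coefficients are established, the remainder is purely formal.
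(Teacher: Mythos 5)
Your proposal is correct and follows exactly the route the paper intends: the corollary is stated immediately after Theorem \ref{T2.8} precisely because it is the specialization of that formula to the case where the generalized Cohen--Macaulay filtration is the (genuinely Cohen--Macaulay) dimension filtration, and the paper leaves the deduction implicit (citing \cite{T13}). Your bookkeeping checks out: the concentration of $H^j_{\frak m}(M/D_i)$ in the degrees $d_{i+1},\dots,d_t$ forces the inner sum to have the single term $j=d_{i+1}$ with coefficient $1-0=1$, and the resulting sum over $l$ is the stated one.
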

Finally we present the splitting property of local cohomology in \cite{CQ11,CQ16}, and the notion of $C$-system of parameters.
\begin{definition}\rm The largest submodule of $M$ of dimension less than $d$ is called {\it the unmixed component} of $M$ and denoted by $U_M(0)$.
\end{definition}
Notice that if $\mathcal{D}: D_0 \subseteq D_1 \subseteq \cdots \subseteq D_t =
M$ is the dimension filtration of $M$, then $D_{t-1} = U_M(0)$. Moreover, if $x \in \frak b(M)$ is a parameter element of $M$, then $0:_M x = U_M(0)$.
\begin{theorem}[\cite{CQ16}, Corollary 3.2.5]\label{T2.11} Let $x \in \mathfrak{b}(M)^3$ be a parameter element of $M$. Let $U_M(0)$ be the unmixed component of $M$ and set $\overline{M} = M/U_M(0)$.
 Then
$$H^i_{\mathfrak{m}}(M/xM) \cong H^i_{\mathfrak{m}}(M) \oplus H^{i+1}_{\mathfrak{m}}(\overline{M})$$
for all $i<d-1$, and
$$\mathrm{Soc}(H^{d-1}_{\mathfrak{m}}(M/xM))  \cong \mathrm{Soc}(H^{d-1}_{\mathfrak{m}}(M)) \oplus
\mathrm{Soc}(H^{d}_{\mathfrak{m}}(M)).$$
\end{theorem}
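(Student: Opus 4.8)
The plan is to route everything through the multiplication-by-$x$ short exact sequence and its long exact sequence in local cohomology, and then to peel off the summands by showing that the comparison maps vanish below degree $d$ and that the resulting sequences split. First I would record the backbone sequence. Since $x\in\mathfrak{b}(M)$ is a parameter element, the excerpt already gives $0:_M x=U_M(0)$, so multiplication by $x$ induces an isomorphism $\overline{M}=M/U_M(0)\xrightarrow{\ \cong\ }xM$ and hence a short exact sequence
$$0\to\overline{M}\xrightarrow{\ x\ }M\xrightarrow{\ p\ }M/xM\to 0.$$
From the sequence $0\to U_M(0)\to M\xrightarrow{\pi}\overline{M}\to 0$ together with $\dim U_M(0)<d$ I would also record the isomorphism $H^d_{\fm}(M)\cong H^d_{\fm}(\overline{M})$; this is exactly what lets the term $H^{d}_{\fm}(\overline{M})$ predicted on the $i=d-1$ line match the $H^{d}_{\fm}(M)$ appearing in the stated socle formula. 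Writing $\mu_*\colon H^i_{\fm}(\overline{M})\to H^i_{\fm}(M)$ for the maps induced by $x$ in the long exact sequence, the problem reduces to showing $\mu_*=0$ for $i<d$ and then splitting.

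The heart of the matter is the vanishing of $\mu_*$ for $i<d$, and this is where the hypothesis $x\in\mathfrak{b}(M)^3$ rather than merely $x\in\mathfrak{b}(M)$ is spent. By Schenzel's inclusion $\mathfrak{b}(M)\subseteq\mathfrak{a}_i(M)$ from Remark \ref{R2.2}, each factor of $x$ kills $H^i_{\fm}(M)$ for $i<d$; combined with the factorizations $\mu\circ\pi=x\cdot\mathrm{id}_M$ and $\pi\circ\mu=x\cdot\mathrm{id}_{\overline{M}}$ this forces $\mu_*\circ\pi_*=0$ and $\pi_*\circ\mu_*=0$ on the relevant cohomology. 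For $i\ge\dim U_M(0)$ the map $\pi_*$ is already surjective, since its cokernel embeds into $H^{i+1}_{\fm}(U_M(0))=0$, so for any $\eta=\pi_*(\xi)$ one gets $\mu_*(\eta)=\mu_*\pi_*(\xi)=x\xi=0$ and hence $\mu_*=0$ immediately. The remaining low degrees $i<\dim U_M(0)$ are handled by feeding the same annihilation through the sequence for $U_M(0)$, and it is precisely this recursion onto the unmixed component that consumes the extra factors of $\mathfrak{b}(M)$. Once $\mu_*=0$ for $i<d$, the long exact sequence breaks into
$$0\to H^i_{\fm}(M)\xrightarrow{\ p_*\ }H^i_{\fm}(M/xM)\xrightarrow{\ \partial\ }H^{i+1}_{\fm}(\overline{M})\to 0\qquad(i\le d-1),$$
with the caveat that at $i=d-1$ the final arrow $\mu_*\colon H^{d}_{\fm}(\overline{M})\to H^{d}_{\fm}(M)$ is, under the isomorphism above, multiplication by $x$ on top cohomology, hence nonzero, so there the sequence is only left exact.

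It then remains to split these sequences for $i<d-1$ and to pass to socles at $i=d-1$. For $i<d-1$ all three terms have finite length and are annihilated by $x$ (the middle one because $x\,(M/xM)=0$), and I would use the last remaining factor of $\mathfrak{b}(M)$ to build a section of $\partial$, yielding the isomorphism $H^i_{\fm}(M/xM)\cong H^i_{\fm}(M)\oplus H^{i+1}_{\fm}(\overline{M})$. At $i=d-1$ I would apply the left-exact functor $\mathrm{Soc}(-)=\Hom(R/\fm,-)$ to $0\to H^{d-1}_{\fm}(M)\to H^{d-1}_{\fm}(M/xM)\xrightarrow{\partial} H^{d}_{\fm}(\overline{M})$; left exactness handles the submodule, and surjectivity onto $\mathrm{Soc}(H^{d}_{\fm}(\overline{M}))$ follows because $\mathrm{im}(\partial)=\ker\mu_*=\ker\bigl(x\ \text{on}\ H^d_{\fm}(\overline{M})\bigr)$ contains $\mathrm{Soc}(H^{d}_{\fm}(\overline{M}))$, as $x\in\fm$ annihilates every socle element. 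Substituting $H^{d}_{\fm}(\overline{M})\cong H^{d}_{\fm}(M)$ then gives the claimed socle decomposition.

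I expect the genuine obstacle to be two-fold. First, establishing $\mu_*=0$ in the low degrees $i<\dim U_M(0)$, where one must control the annihilator of $H^{i+1}_{\fm}(U_M(0))$ and iterate the argument onto the unmixed component; this is what forces the third power of $\mathfrak{b}(M)$. Second, producing the splitting for $i<d-1$: finite length together with annihilation by $x$ is not by itself enough, and a genuine section must be constructed. The asymmetry at the top, where only socles survive, is unavoidable and is caused precisely by the non-vanishing of multiplication by $x$ on $H^{d}_{\fm}(\overline{M})$.
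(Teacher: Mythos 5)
The paper does not actually prove Theorem \ref{T2.11}: it is imported verbatim from \cite[Corollary 3.2.5]{CQ16}, so there is no internal argument to measure you against. Your skeleton --- the short exact sequence $0\to \overline{M}\xrightarrow{\ x\ } M\to M/xM\to 0$ coming from $0:_Mx=U_M(0)$, the reduction to showing that the maps $\mu_*\colon H^i_{\fm}(\overline{M})\to H^i_{\fm}(M)$ vanish for $i<d$, and the observation that at the top degree only a left-exact sequence survives --- is the right frame, and your treatment of the range $\dim U_M(0)\le i<d$ via $\mu_*\circ\pi_*=x\cdot\mathrm{id}$ and $x\in\mathfrak b(M)\subseteq\mathfrak a_i(M)$ is correct.

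There is, however, a genuine error at the one delicate step you present as settled: the socle surjectivity at $i=d-1$. From $0\to H^{d-1}_{\fm}(M)\to H^{d-1}_{\fm}(M/xM)\xrightarrow{\ \partial\ }\bigl(0:_{H^d_{\fm}(M)}x\bigr)\to 0$, left exactness of $\Hom(R/\fm,-)$ handles the left-hand term, but the fact that $\mathrm{Soc}(H^{d}_{\fm}(\overline{M}))$ is \emph{contained in} $\im \partial$ only says each socle element equals $\partial(\xi)$ for \emph{some} $\xi$; it does not let you choose $\xi$ in $\mathrm{Soc}(H^{d-1}_{\fm}(M/xM))$. The obstruction lives in $\Ext^1_R(R/\fm,H^{d-1}_{\fm}(M))$ and does not vanish for free; this surjectivity is exactly the Goto--Sakurai-type statement that is the hard content of the theorem (compare Lemma \ref{L2.14} of the paper, which \emph{deduces} such a surjectivity from Theorem \ref{T2.11} and so cannot be fed back in as input). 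Beyond that, the two steps you yourself flag as ``the genuine obstacle'' --- the vanishing of $\mu_*$ in degrees $i<\dim U_M(0)$ and the construction of an actual section of $\partial$ for $i<d-1$ --- are precisely where the hypothesis $x\in\mathfrak b(M)^3$ must be spent, and no mechanism is offered for either; note also that your claim that all three terms have finite length for $i<d-1$ is false unless $M$ is generalized Cohen--Macaulay. As it stands the proposal is a correct reduction together with an invalid argument for the one nontrivial point it claims to prove, plus acknowledged placeholders for the rest.
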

It is natural to raise the following notion.
\begin{definition}[\cite{CQ16}] \rm A system of parameters $x_1, ..., x_d$ is called a {\it $C$-system of parameters} of $M$ if $x_d \in \mathfrak b(M)^3$ and $x_i \in \mathfrak b(M/(x_{i+1}, ..., x_d)M)^3$ for all $i = d-1, ..., 1$.
\end{definition}
 According to \cite{CC15}, $R$ is a homomorphic image of a Cohen-Macaulay local ring if and only if every finitely generated $R$-module admits a $C$-system of parameter.
The following are useful properties of $C$-system of parameters.
\begin{lemma}\label{L2.15} Let $x_1, ..., x_d$ be a $C$-system of parameters of $M$. Then
   \begin{enumerate}[{(i)}]\rm
\item $x_1, ..., x_d$ is a $d$-sequence.
\item $x_1^{n_1}, ..., x_d^{n_d}$ is a $C$-system of parameters of $M$ for all $n_1, ...., n_d \ge 1$.
\item For all $i \le d$ we have $x_1, ..., x_{i-1}, x_{i+1}, ..., x_d$ is a $C$-system of parameters of $M/x_iM$.
\item Let $N \subseteq H^0_{\mathfrak m}(M)$ be a submodule of finite length. Then $x_1, ..., x_d$ is a $C$-system of parameters of $M/N$.
\end{enumerate}
\end{lemma}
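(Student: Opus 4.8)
The plan is to treat the four statements together, by induction on $d=\dim M$, taking as the basic input the fact recorded just before Theorem \ref{T2.11}: a parameter element $x\in\mathfrak b(M)$ satisfies $0:_M x=U_M(0)$, and since $\overline M=M/U_M(0)$ is unmixed of dimension $d$, such an $x$ is a nonzerodivisor on $\overline M$. Before anything else I would isolate one soft but decisive monotonicity property of $\mathfrak b$: \emph{if $z$ is any parameter element of $M$, then $\mathfrak b(M)\subseteq\mathfrak b(M/zM)$.} This is immediate from the definition: every system of parameters $w_1,\dots,w_{d-1}$ of $M/zM$ lifts to the system of parameters $z,w_1,\dots,w_{d-1}$ of $M$, and the colon module $0:_{M/(z,w_1,\dots,w_{k-1})M}w_k$ governing $\mathfrak b(M/zM)$ is literally one of the colon modules occurring in the intersection defining $\mathfrak b(M)$; hence $\mathfrak b(M)$ annihilates it.

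Given this, part (iii) is essentially formal. Writing out the $C$-defining conditions for the sequence $x_1,\dots,x_{i-1},x_{i+1},\dots,x_d$ on $M/x_iM$, the conditions indexed by $j<i$ coincide verbatim with the original ones for $M$, because quotienting $M/x_iM$ by $x_{i+1},\dots,x_d$ and by $x_i$ simply reproduces $M/(x_{i+1},\dots,x_d)M$; nothing is to be checked there. For $j>i$ one must pass from $\mathfrak b(P)^3$ to $\mathfrak b(P/x_iP)^3$, where $P=M/(x_{j+1},\dots,x_d)M$ and $x_i$ is a parameter element of $P$, and this is exactly the monotonicity above applied to $z=x_i$ on $P$ (the topmost condition $x_d\in\mathfrak b(M/x_iM)^3$ being the same statement with $P=M$). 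For part (iv) I would use Schenzel's inclusion $\mathfrak b(P)\subseteq\bigcap_{j<\dim P}\mathfrak a_j(P)$ from Remark \ref{R2.2}, which gives $\mathfrak b(P)\subseteq\mathfrak a_0(P)=\Ann H^0_{\mathfrak m}(P)$. Reducing as in (iii), the claim becomes that a finite-length $\bar N\subseteq H^0_{\mathfrak m}(P)$ does not destroy goodness. Chasing the colon $0:_{P/(\bar N+(w_1,\dots,w_{k-1})P)}w_k$, an element $m$ of it satisfies $w_k m=\nu+p$ with $\nu\in\bar N$ and $p\in(w_1,\dots,w_{k-1})P$; since $\mathfrak b(P)$ kills $\nu$ and then kills the resulting class in $0:_{P/(w_1,\dots,w_{k-1})P}w_k$, one obtains $\mathfrak b(P)^2\subseteq\mathfrak b(P/\bar N)$.

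Parts (ii) and (i) are the substantive ones. For (ii) the difficulty is that replacing the tail $x_{i+1},\dots,x_d$ by powers changes the module $M/(x_{i+1},\dots,x_d)M$ in whose $\mathfrak b$ the element $x_i$ must lie, and here monotonicity does not apply (cutting by $x_d$ on $M/x_d^{\,n}M$ does not drop dimension). The key is Theorem \ref{T2.11}: since $x_d$ and $x_d^{\,n}$ are both good parameters of $M$, it yields $H^j_{\mathfrak m}(M/x_dM)\cong H^j_{\mathfrak m}(M)\oplus H^{j+1}_{\mathfrak m}(\overline M)\cong H^j_{\mathfrak m}(M/x_d^{\,n}M)$ for $j<d-1$, with matching socles in degree $d-1$, so the cut is insensitive to the power; I would feed this into the induction via (iii) and change one power at a time.

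For part (i) I would reduce to verifying, one at a time, the equalities $(x_1,\dots,x_{i-1})M:_M x_ix_j=(x_1,\dots,x_{i-1})M:_M x_j$ for $i\le j$. Those whose largest index is $j=d$ involve the good element $x_d$ and are handled by the basic fact (applied to $x_d$ and $x_d^2$ on the relevant quotients $L$, giving $0:_L x_d=0:_L x_d^2=U_L(0)$) together with $\mathfrak b(M)\cdot(0:_M x_j)=0$, which holds because $0:_M x_j$ is itself one of the colon modules defining $\mathfrak b(M)$. The remaining equalities involve only $x_1,\dots,x_{d-1}$, which by (iii) form a $C$-system of $M/x_dM$ and hence a $d$-sequence there by induction; transferring these relations back up to $M$, across the non-injective multiplication by $x_d$ whose kernel is the controlled module $U_M(0)$, is the step I expect to require the most care. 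Indeed, the sole genuine obstacle throughout (ii)--(iv) is quantitative rather than conceptual: upgrading the natural containments such as $\mathfrak b(P)^2\subseteq\mathfrak b(\,\cdot\,)$ coming from the colon chases to the exact membership of $x_i$ in the \emph{cube} $\mathfrak b(\,\cdot\,)^3$, for which the good-parameter hypotheses and Theorem \ref{T2.11} must be exploited carefully; the monotonicity lemma, Theorem \ref{T2.11}, and the fact $0:_M x=U_M(0)$ supply all the remaining structural input.
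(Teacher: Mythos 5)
The paper itself gives no argument for this lemma: parts (i)--(iii) are quoted from \cite{CQ16} (Lemma 3.8, Corollary 4.7, Proposition 4.8) and part (iv) from \cite{MQ16} (Lemma 2.16(iv)), so your proposal has to be judged on its own. The one piece that does stand up is part (iii): the monotonicity $\mathfrak b(M)\subseteq\mathfrak b(M/zM)$ for a parameter element $z$ is correct (every colon module defining $\mathfrak b(M/zM)$ is one of the colon modules defining $\mathfrak b(M)$ for the system of parameters beginning with $z$), and it does reduce (iii) to the observation that the quotients indexed by $j<i$ are literally unchanged while those indexed by $j>i$ are of the form $P/x_iP$ with $P=M/(x_{j+1},\dots,x_d)M$. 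That argument I would accept as written.

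The other three parts have genuine gaps. For (ii), the appeal to Theorem \ref{T2.11} is a non sequitur: the isomorphisms $H^j_{\mathfrak m}(M/x_dM)\cong H^j_{\mathfrak m}(M/x_d^{\,n}M)$ are abstract isomorphisms of modules, and $\mathfrak b(-)$ is \emph{not} a function of the local cohomology modules --- Schenzel's result in Remark \ref{R2.2} only sandwiches it between $\mathfrak a(-)$ and $\bigcap_i\mathfrak a_i(-)$ --- so equality of the $H^j$'s gives no control on whether $x_{d-1}$ lies in $\mathfrak b(M/x_d^{\,n}M)^3$. The entire content of (ii) is that the memberships $x_i\in\mathfrak b(M/(x_{i+1},\dots,x_d)M)^3$ survive raising the \emph{later} elements to powers, and ``change one power at a time'' does not address this. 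For (iv), your colon chase correctly proves $\mathfrak b(P)^2\subseteq\mathfrak b(P/\bar N)$, but this converts $x_j\in\mathfrak b(P)^3$ only into $x_j\in\mathfrak b(P/\bar N)$, not into $x_j\in\mathfrak b(P/\bar N)^3$; to get the cube you would need $\mathfrak b(P)\subseteq\mathfrak b(P/\bar N)$ or $x_j\in\mathfrak b(P)^6$, and you flag this exponent loss without closing it. For (i), the $d$-sequence relations $(x_1,\dots,x_{i-1})M:_Mx_ix_j=(x_1,\dots,x_{i-1})M:_Mx_j$ with $j<d$ are colons computed in $M$, not in $M/x_dM$, so the inductive hypothesis downstairs does not produce them; ``transferring back up across the non-injective multiplication by $x_d$'' is exactly the missing argument, as you acknowledge. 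So the proposal establishes (iii) but leaves (i), (ii) and (iv) unproved; for these one really does need the machinery of \cite{CQ16} and \cite{MQ16} (dd-sequences and the behaviour of $\mathfrak b$ under powers), not just the two structural facts you isolate.
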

\begin{proof} For the three first properties see \cite[Lemma 3.8, Corollary 4.7 and Proposition 4.8]{CQ16}. For the last one we refer to \cite[Lemma 2.16 (iv)]{MQ16}.
\end{proof}
Let $x \in \mathfrak{b}(M)^3$ be a parameter element of $M$. The short exact sequence
$$0 \to M/U_M(0) \overset{x}{\to} M \to M/xM \to 0$$
induces
$$\cdots \to H^{d-1}_{\frak m}(M/xM) \to H^d_{\frak m}(M/U_M(0)) \cong H^d_{\frak m}(M) \to \cdots.$$
The last isomorphism of Theorem \ref{T2.11} says that the induced map on the socles is surjective
$$\mathrm{Soc}(H^{d-1}_{\frak m}(M/xM) \twoheadrightarrow  \mathrm{Soc}(H^d_{\frak m}(M)).$$
By induction we have the following, here $ H^i(\underline{x};M)$ denotes the $i$-th Koszul cohomology of $M$ with respect to the sequence $\underline{x} = x_1, ..., x_d$.
\begin{lemma}\label{L2.13} Let $\underline{x} = x_1, ..., x_d$ be a $C$-system of parameters of $M$. Then the canonical map
$$M/(\underline{x})M \cong H^d(\underline{x};M) \to H^d_{\frak m}(M)$$
induces a surjective map on the socles
$$\mathrm{Soc}(M/(\underline{x})M) \twoheadrightarrow \mathrm{Soc}(H^d_{\frak m}(M)).$$
\end{lemma}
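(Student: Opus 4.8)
The plan is to induct on $d=\dim M$, peeling off the last parameter $x_d$ and passing to $N:=M/x_dM$. The two ingredients are the induction hypothesis for $N$ and the single-parameter socle surjectivity established in the paragraph just before the statement; the glue between them is the compatibility of the canonical Koszul-to-local-cohomology map with the connecting homomorphism of the multiplication-by-$x_d$ sequence. For the base case $d=1$, note that $\underline x=x_1\in\mathfrak b(M)^3$ and $H^1(\underline x;M)=M/x_1M$, which has dimension $0$ and is therefore $\fm$-torsion, so $M/x_1M=H^0_{\fm}(M/x_1M)$. Under this identification the canonical map $M/x_1M\to H^1_{\fm}(M)$ is the connecting map $\delta$ of $0\to\overline M\overset{x_1}{\to}M\to M/x_1M\to 0$ (with $\overline M=M/U_M(0)$) followed by $H^1_{\fm}(\overline M)\cong H^1_{\fm}(M)$, and the desired surjection on socles is then exactly the one recorded immediately before the statement.

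For the inductive step, let $d\ge 2$, put $N=M/x_dM$ and $\underline x'=x_1,\dots,x_{d-1}$. Since $x_d\in\mathfrak b(M)^3$, Lemma \ref{L2.15}(iii) shows that $\underline x'$ is a $C$-system of parameters of $N$, and $\dim N=d-1$. The induction hypothesis for $N$ gives a surjection
$$\mathrm{Soc}(N/(\underline x')N)\twoheadrightarrow\mathrm{Soc}(H^{d-1}_{\fm}(N))$$
coming from the canonical map $N/(\underline x')N\to H^{d-1}_{\fm}(N)$, while the surjectivity recorded before the statement, applied to the parameter $x_d\in\mathfrak b(M)^3$ of $M$ (this is where Theorem \ref{T2.11} enters), gives
$$\mathrm{Soc}(H^{d-1}_{\fm}(N))\twoheadrightarrow\mathrm{Soc}(H^d_{\fm}(M))$$
induced by the connecting map $\delta$ of $0\to\overline M\overset{x_d}{\to}M\to N\to 0$. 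Using the canonical identification $M/(\underline x)M=N/(\underline x')N$ and composing these two surjections yields a surjection $\mathrm{Soc}(M/(\underline x)M)\twoheadrightarrow\mathrm{Soc}(H^d_{\fm}(M))$.

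It remains to identify this composite with the map induced by the canonical map $M/(\underline x)M\to H^d_{\fm}(M)$, and this compatibility is the one genuinely non-formal point. Concretely, I would verify on the \v{C}ech complex $\check C^\bullet(\underline x;M)$ that the canonical map sends $\overline m$ to the class of $m/(x_1\cdots x_d)$ in $H^d_{\fm}(M)$, that $x_d$ acts as zero on $N$ so that $H^{d-1}_{\fm}(N)$ is computed from $x_1,\dots,x_{d-1}$ with the canonical map for $N$ sending $\overline m$ to the class of $m/(x_1\cdots x_{d-1})$, and that the snake-lemma recipe for $\delta$ carries the latter class to the former (up to a sign that is irrelevant on socles). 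Granting this naturality of the canonical map along the dimension-shifting sequence $0\to\overline M\overset{x_d}{\to}M\to N\to 0$, the composite above is precisely the socle map in the statement, so it is surjective and the induction closes. The expected main obstacle is exactly this \v{C}ech-level verification of the factorization; the structural reductions via Lemma \ref{L2.15}(iii) and Theorem \ref{T2.11} are then routine.
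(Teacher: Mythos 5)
Your argument is correct and is essentially the paper's own (the paper gives no written proof beyond the remark that the single-step socle surjection $\mathrm{Soc}(H^{d-1}_{\frak m}(M/x_dM))\twoheadrightarrow\mathrm{Soc}(H^d_{\frak m}(M))$ from Theorem \ref{T2.11} yields the lemma ``by induction''). You carry out exactly that induction via $N=M/x_dM$ and Lemma \ref{L2.15}(iii), and the one point you flag --- that the canonical map $M/(\underline{x})M\to H^d_{\frak m}(M)$ factors, up to sign, as the canonical map for $N$ followed by the connecting homomorphism of $0\to\overline M\overset{x_d}\to M\to N\to 0$ --- is a routine \v{C}ech-complex check that the paper leaves implicit.
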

We will need the following in the sequel.
\begin{lemma} \label{L2.14}
Let $\underline{x} = x_1, ..., x_d$ be a $C$-system of parameters of $M$. Then we have a surjective map on the socles
$$\mathrm{Soc}( H^{d-1}_{\frak m}(M/x_1M)) \twoheadrightarrow \mathrm{Soc}(H^d_{\frak m}(M)).$$
\end{lemma}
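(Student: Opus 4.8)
The plan is to realise the desired surjection as the connecting homomorphism of a short exact sequence and to control its effect on socles by comparing two instances of Lemma \ref{L2.13}.

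First I would record that $0:_M x_1$ has finite length. Since $x_1, \dots, x_d$ is a $d$-sequence (Lemma \ref{L2.15}(i)), the equalities $0:_M x_1 x_j = 0:_M x_j$ give $0:_M x_1 \subseteq 0:_M x_j$ for every $j$; hence $(x_1, \dots, x_d)(0:_M x_1) = 0$, so $0:_M x_1$ is annihilated by an $\mathfrak{m}$-primary ideal and is therefore of finite length. Consequently $H^i_{\mathfrak{m}}(0:_M x_1) = 0$ for $i \ge 1$, so $H^d_{\mathfrak{m}}(M/(0:_M x_1)) \cong H^d_{\mathfrak{m}}(M)$, and the short exact sequence $0 \to M/(0:_M x_1) \xrightarrow{x_1} M \to M/x_1 M \to 0$ produces a connecting map $\delta \colon H^{d-1}_{\mathfrak{m}}(M/x_1 M) \to H^d_{\mathfrak{m}}(M)$. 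Being $R$-linear, $\delta$ carries $\mathrm{Soc}(H^{d-1}_{\mathfrak{m}}(M/x_1M))$ into $\mathrm{Soc}(H^d_{\mathfrak{m}}(M))$, so it remains to prove that this restriction is onto.

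Next I would bring in Lemma \ref{L2.13} twice. Set $N = M/x_1 M$; by Lemma \ref{L2.15}(iii) the sequence $x_2, \dots, x_d$ is a $C$-system of parameters of $N$, and $N/(x_2, \dots, x_d)N = M/(\underline{x})M$. Lemma \ref{L2.13} applied to $M$ gives a canonical map $\phi_M \colon M/(\underline{x})M \to H^d_{\mathfrak{m}}(M)$ that is surjective on socles, while applied to $N$ it gives $\phi_N \colon M/(\underline{x})M \to H^{d-1}_{\mathfrak{m}}(M/x_1 M)$, also surjective on socles. The heart of the argument is the commutativity
$$\delta \circ \phi_N = \phi_M$$
up to a unit. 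Granting this, $\mathrm{Soc}(\delta) \circ \mathrm{Soc}(\phi_N) = \mathrm{Soc}(\phi_M)$ is surjective, whence $\mathrm{Soc}(\delta)$ is surjective, which is precisely the assertion.

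The main obstacle is establishing $\delta \circ \phi_N = \phi_M$. I would verify it on \v{C}ech cocycles: under the identifications $H^d_{\mathfrak{m}}(M) = H^d(\check{C}(\underline{x}; M))$ and $H^{d-1}_{\mathfrak{m}}(N) = H^{d-1}(\check{C}(x_2, \dots, x_d; N))$, the map $\phi_N$ sends the class of $\overline{m}$ to $[\overline{m}/(x_2 \cdots x_d)]$, while the connecting homomorphism for multiplication by $x_1$ sends $[\overline{m}/(x_2 \cdots x_d)]$ to $[m/(x_1 x_2 \cdots x_d)] = \phi_M(\overline{m})$, where $m \in M$ lifts $\overline{m}$; the ambiguity in the lift lies in $0:_M x_1$, which is harmless in top degree because that module has finite length. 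The only delicate point is the careful comparison, with signs, between the Koszul-to-local-cohomology canonical maps and the connecting maps; once this \v{C}ech bookkeeping is in place the proof closes. A purely homological alternative is to prove instead that $\ker \mathrm{Soc}(\phi_N) \subseteq \ker \mathrm{Soc}(\phi_M)$, which yields the same factorisation of $\mathrm{Soc}(\phi_M)$ through $\mathrm{Soc}(\phi_N)$, but identifying these kernels appears to require the same computation.
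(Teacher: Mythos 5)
Your argument is correct, and it reaches the surjection by the same underlying mechanism as the paper --- namely, by factoring the socle surjection $\mathrm{Soc}(M/(\underline{x})M)\twoheadrightarrow\mathrm{Soc}(H^d_{\mathfrak{m}}(M))$ of Lemma \ref{L2.13} through $\mathrm{Soc}(H^{d-1}_{\mathfrak{m}}(M/x_1M))$ via a connecting homomorphism --- but the execution is genuinely different and somewhat leaner. The paper first reduces to $\operatorname{depth} M>0$ by passing to $M/H^0_{\mathfrak{m}}(M)$ (Lemma \ref{L2.15}(iv)), so that $x_1$ is $M$-regular, and then runs the commutative ladder between the Koszul-cohomology and local-cohomology long exact sequences of $0\to M\overset{x_1}{\to}M\to M/x_1M\to 0$; the essential extra input is that the Koszul long exact sequence of a $d$-sequence splits (\cite[Lemma 1.7]{GS84}), which is what makes the top connecting map surjective even after applying $\mathrm{Soc}$. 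You avoid both of these steps: you handle a possibly non-regular $x_1$ directly by replacing $M$ with $M/(0:_Mx_1)$ (your finite-length argument via the $d$-sequence property is correct), and you replace the ladder-plus-splitting by the single cocycle identity $\delta\circ\phi_N=\pm\,\phi_M$, which does hold and is verified exactly as you sketch. Note that you do not even need $\mathrm{Soc}(\phi_N)$ to be surjective --- surjectivity of a composite forces surjectivity of the outer map --- so the second application of Lemma \ref{L2.13} is superfluous. What your route buys is independence from \cite[Lemma 1.7]{GS84} and from the depth reduction; what it costs is the \v{C}ech bookkeeping, which you rightly identify as the only delicate point.

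One small correction to your sketch: in computing $\delta$ on the class $[\overline{m}/(x_2\cdots x_d)]$, the ambiguity in lifting $\overline{m}\in M/x_1M$ to $m\in M$ lies in $x_1M$, not in $0:_Mx_1$; after dividing by $x_1$ this ambiguity contributes a class of the form $[m'/(x_2\cdots x_d)]$, which is a coboundary in top degree and hence vanishes. The finite length of $0:_Mx_1$ is used only to identify $H^d_{\mathfrak{m}}(M/(0:_Mx_1))$ with $H^d_{\mathfrak{m}}(M)$, as you state earlier. This does not affect the validity of the proof.
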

\begin{proof} We proceed by induction on $d$. The case $d=1$ is clear since $x_1 \in \frak b(M)^3$. For $d>1$, set $N  = H^0_{\frak m}(M)$. By Lemma \ref{L2.15} (iv), $\underline{x} = x_1, ..., x_d$ is a $C$-system of parameters of $M/N$. Moreover, we have $H^d_{\frak m}(M) \cong H^d_{\frak m}(M/N)$ and $H^{d-1}_{\frak m}(M/x_1M) \cong H^{d-1}_{\frak m}(M/x_1M+N)$. Thus we can assume henceforth that $\mathrm{depth}(M)>0$. Hence $x_1$ is $M$-regular since $\underline{x}$ is a $d$-sequence (see Lemma \ref{L2.15} (i)). The short exact sequence
$$0 \to M \overset{x_1}{\to} M \to M/x_1M \to 0$$
induces the following commutative diagram
$$
\begin{CD}
\cdots @>>> H^{d-1}(\underline{x};M) @>>> H^{d-1}(\underline{x};M/x_1M) @>>> H^{d}(\underline{x};M) @>>> \cdots\\
@. @VVV @VVV @VVV \\
\cdots @>>> H^{d-1}_{\frak m}(M) @>>> H^{d-1}_{\frak m}(M/x_1M) @>>> H^{d}_{\frak m}(M) @>>> \cdots\\
\end{CD}
$$
with rows are exact. Furthermore the top row splits by \cite[Lemma 1.7]{GS84}. Restricting on the socle modules we have the commutative diagram
$$
\begin{CD}
0 @>>> \mathrm{Soc}(H^{d-1}(\underline{x};M)) @>>> \mathrm{Soc}(H^{d-1}(\underline{x};M/x_1M)) @>\delta>> \mathrm{Soc}(H^{d}(\underline{x};M)) @>>> 0\\
@. @VVV @VVV @V\pi VV \\
\cdots @>>> \mathrm{Soc}(H^{d-1}_{\frak m}(M)) @>>> \mathrm{Soc}(H^{d-1}_{\frak m}(M/x_1M)) @>\gamma>> \mathrm{Soc}(H^{d}_{\frak m}(M)) @>>> \cdots.\\
\end{CD}
$$
Notice that $\delta$ is surjective and $\pi$ is also surjective by Lemma \ref{L2.13}. Thus the map
$$\gamma : \mathrm{Soc}(H^{d-1}_{\frak m}(M/x_1M)) \to \mathrm{Soc}(H^{d}_{\frak m}(M))$$
is surjective.
\end{proof}

\section{The stable value}
In this section we cover all stable results on the index of reducibility of parameter ideals mentioned in the previous section. The following lemma immediately follows from Theorem \ref{T2.11}.
\begin{lemma}\label{L3.1}
Let $x \in \mathfrak{b}(M)^3$ be a parameter element of $M$. Let $U_M(0)$ be the unmixed component of $M$ and set $\overline{M} = M/U_M(0)$.
 Then for all $i\leq d-1$ we have
$$\dim_{R/\frak m} \mathrm{Soc}(H^i_{\mathfrak{m}}(M/xM)) = \dim_{R/\frak m} \mathrm{Soc}(H^i_{\mathfrak{m}}(M)) +
\dim_{R/\frak m} \mathrm{Soc}(H^{i+1}_{\mathfrak{m}}(\overline{M})).$$
\end{lemma}

We now state the main result of this section.
\begin{theorem}\label{T3.2}
Let $\underline{x} = x_1, ..., x_d$ be a $C$-system of parameters of $M$. Then the index of reducibility of $\underline{x}$ on $M$, $\mathrm{ir}_M(\underline{x})$, is independent of the choice of $\underline{x}$.
\end{theorem}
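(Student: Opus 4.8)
The plan is to prove independence by reducing to a lower-dimensional module, inducting on $d=\dim M$, and along the way upgrading the statement to an \emph{intrinsic formula}: I would show that $\mathrm{ir}_M(\underline x)$ equals a fixed integer combination of the socle dimensions $s_i(N):=\dim_{R/\frak m}\mathrm{Soc}(H^i_{\frak m}(N))$ attached to $M$ and to the quotients in its dimension filtration, from which independence is immediate. Throughout recall $\mathrm{ir}_M(\underline x)=\dim_{R/\frak m}\mathrm{Soc}(M/(\underline x)M)$. The decisive point in choosing the reduction is that the definition of a $C$-system supplies $x_d\in\frak b(M)^3$, which is precisely the hypothesis under which Lemma \ref{L3.1} applies to $M$ with parameter $x_d$; this is why one should peel off $x_d$, not $x_1$. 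For the base case $d=1$, $M/x_dM$ has dimension $0$, so $\mathrm{ir}_M(x_d)=s_0(M/x_dM)=s_0(M)+s_1(\overline M)$ by Lemma \ref{L3.1}; since $U_M(0)$ has dimension $0$, the sequence $0\to U_M(0)\to M\to\overline M\to 0$ gives $s_1(\overline M)=s_1(M)$, and $\mathrm{ir}_M(x_d)=s_0(M)+s_1(M)$ is intrinsic.

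For $d>1$ set $M'=M/x_dM$, of dimension $d-1$. By Lemma \ref{L2.15}(iii) the sequence $x_1,\dots,x_{d-1}$ is a $C$-system of parameters of $M'$, and since $M/(\underline x)M=M'/(x_1,\dots,x_{d-1})M'$ we obtain $\mathrm{ir}_M(\underline x)=\mathrm{ir}_{M'}(x_1,\dots,x_{d-1})$. By induction the right-hand side equals an intrinsic invariant $\Phi(M')$, a fixed integer combination of the numbers $s_j(M'/D'_i)$ as $D'_0\subseteq\cdots\subseteq D'_{t'}=M'$ runs through the dimension filtration of $M'$ (this is the shape of the formula recalled in Theorem \ref{T2.8}). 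It then remains to show $\Phi(M')$ depends only on $M$. I would establish this by proving that the dimension filtration of $M'$ is the image of that of $M$, i.e. $D'_i=(D_i+x_dM)/x_dM$ where $D_0\subseteq\cdots\subseteq D_t=M$ is the dimension filtration of $M$; then $M'/D'_i=(M/D_i)/x_d(M/D_i)$.

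Granting this, each quotient entering $\Phi(M')$ is a reduction of $M/D_i$ by $x_d$, so I would compute its socle dimensions by applying Lemma \ref{L3.1} to the module $M/D_i$ with parameter $x_d$: because $M/D_i$ has the \emph{same} top of its dimension filtration as $M$, its unmixed quotient is $\overline{M/D_i}=M/D_{t-1}=\overline M$, whence
$$s_j(M'/D'_i)=s_j\big((M/D_i)/x_d(M/D_i)\big)=s_j(M/D_i)+s_{j+1}(\overline M).$$
Every term on the right is an invariant of $M$, so substituting back converts $\Phi(M')$ into an intrinsic expression $\Psi(M)$ free of $\underline x$, which proves the theorem; reassembling the coefficients is the same Pascal-type bookkeeping as in the generalized Cohen-Macaulay case of Remark \ref{R2.4}(iii). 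I expect the main obstacle to be the two genericity inputs buried in the middle paragraph, namely that the dimension filtration descends along $x_d$ (so that $D'_i=(D_i+x_dM)/x_dM$), and that $x_d$ remains a legitimate parameter for Lemma \ref{L3.1} on each $M/D_i$, that is $x_d\in\frak b(M/D_i)^3$. Both should follow from the defining property $x_d\in\frak b(M)^3$ of a $C$-system together with the good-parameter behaviour established in \cite{CQ16}, but checking them carefully is the crux of the argument; everything after that is combinatorial assembly.
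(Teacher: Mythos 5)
Your reduction $\mathrm{ir}_M(\underline x)=\mathrm{ir}_{M/x_dM}(x_1,\dots,x_{d-1})$ via Lemma \ref{L2.15}(iii) is correct and is also the paper's starting point, but the way you propose to finish has a genuine gap: after this step you are comparing the invariants of two \emph{different} $(d-1)$-dimensional modules $M/x_dM$ and $M/y_dM$, and you try to bridge them by upgrading the induction to an intrinsic closed formula $\Phi$ in the socle dimensions $s_j(M/D_i)$ of the dimension filtration quotients. No such formula exists in general. The paper's own results show this: Theorem \ref{T3.4}(ii) gives $\mathcal N_R(M)\ge\sum_{i=0}^d s_i(M)$ with equality \emph{only} for sequentially Cohen--Macaulay modules, and Theorem \ref{T5.3} computes, for an unmixed $M$ with $\dim M=3$, $\mathrm{depth}\,M=2$ (so the dimension filtration is just $0\subseteq M$ and your $\Phi$ would have to be a combination of the $s_i(M)$ alone), the value $\mathcal N_R(M)=2s_2(M)+s_3(M)+s_2(S)$, where $S$ is the $S_2$-ification --- an invariant not expressible through the $s_j$ of the dimension filtration quotients. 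The shape of formula you invoke comes from Theorem \ref{T2.8}, which is proved only for sequentially generalized Cohen--Macaulay modules. Your auxiliary claim that the dimension filtration descends, $D_i'=(D_i+x_dM)/x_dM$, also fails: the discussion preceding the second proof of Theorem \ref{T5.3} shows that for unmixed $M$ the quotient $M/x_dM$ acquires a \emph{new} filtration term $U_{M/x_dM}(0)\cong S/M$ that is not the image of any $D_i$. So the two points you flag as ``the crux'' are not technical checks but actual failures of the strategy.

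The paper avoids all of this by never comparing invariants of two different quotients. Given two $C$-systems $\underline x$ and $\underline y$, it chooses a single element $z\in\frak b(M/(x_2,\dots,x_d)M)^3\cap\frak b(M/(y_2,\dots,y_d)M)^3$ (possible by Remark \ref{R2.2}), so that $z,x_2,\dots,x_d$ and $z,y_2,\dots,y_d$ are again $C$-systems of $M$. The swap $x_1\rightsquigarrow z$ is then two $C$-systems of the \emph{same} module $M/x_dM$, handled by the inductive hypothesis; likewise $y_1\rightsquigarrow z$; and finally $x_2,\dots,x_d$ and $y_2,\dots,y_d$ are two $C$-systems of the \emph{same} module $M/zM$, again handled by induction. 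This exchange argument proves independence without producing, or needing, any formula for the common value. If you want to salvage your approach you would have to identify the correct intrinsic invariant that $\mathrm{ir}_M(\underline x)$ computes, which is precisely the open problem the paper leaves (cf.\ Remark \ref{R3.5} and Section 5).
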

\begin{proof}
Let $\underline{y} = y_1, ..., y_d$ be a $C$-system of parameters of $M$ we prove that $\mathrm{ir}_M(\underline{x}) = \mathrm{ir}_M(\underline{y})$. We proceed by induction of $d$. The case $d=1$ follows from Lemma \ref{L3.1}. For $d>1$ we choose $z \in \frak b(M/(x_2, \ldots, x_d)M)^3 \cap \frak b(M/(y_2, \ldots, y_d)M)^3$ is a parameter element of both $M/(x_2, \ldots, x_d)M$ and $M/(y_2, \ldots, y_d)M$. The existence of $z$ follows from Remark \ref{R2.2}. Therefore $\underline{x}' = z, x_2, \ldots,x_d$ and $\underline{y}' = z, y_2, \ldots, y_d$ are $C$-systems of parameters of $M$. By Lemma \ref{L2.15} both $x_1, \ldots, x_{d-1}$ and $z,x_2, \ldots, x_{d-1}$ are $C$-systems of parameters of $M/x_dM$. We have
$$\mathrm{ir}_M(\underline{x}) = \mathrm{ir}_{M/x_dM}(x_1, \ldots, x_{d-1}) = \mathrm{ir}_{M/x_dM}(z,x_2, \ldots, x_{d-1}) = \mathrm{ir}_M(\underline{x}'),$$
where the second equality follows from the induction.
Similarly, we have
$$\mathrm{ir}_M(\underline{y}) = \mathrm{ir}_M(\underline{y}').$$
On the other hand, $\underline{x}'' = x_2, \ldots,x_d$ and $\underline{y}'' = y_2, \ldots, y_d$ are $C$-systems of parameters of $M/zM$ by Lemma \ref{L2.15} again. Applying the inductive hypothesis for $M/zM$ we have
$$\mathrm{ir}_M(\underline{x}') = \mathrm{ir}_{M/zM}(\underline{x}'') = \mathrm{ir}_{M/zM}(\underline{y}'') = \mathrm{ir}_M(\underline{y}').$$
The proof is complete.
\end{proof}
\begin{definition}\rm We denote the invariant in the previous theorem by $\mathcal{N}_R(M)$. We call $\mathcal{N}_R(M)$ the {\it stable value} of the indices of reducibility of parameter ideals of $M$.
\end{definition}

The stable value $\mathcal{N}_R(M)$ can be used to characterize (sequentially) Cohen-Macaulay modules as follows.
\begin{theorem}\label{T3.4} The following hold true.
\begin{enumerate}[{(i)}]\rm
\item {\it $M$ is Cohen-Macaulay if and only if $\mathcal{N}_R(M) = \dim_{R/\frak m}\mathrm{Soc}(H^d_{\frak m}(M))$}.
\item {\it $\mathcal{N}_R(M)\ge \sum_{i=0}^{d}\dim_{R/\frak m}\mathrm{Soc}(H^i_{\frak m}(M))$ and the equality occurs if and only if $M$ is sequentially Cohen-Macaulay}.
\end{enumerate}
\end{theorem}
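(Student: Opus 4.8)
The strategy is to compute $\mathcal{N}_R(M)$ explicitly via an inductive reduction by a single $C$-parameter, then read off both characterizations from the resulting formula. The key inductive tool is the combination of Lemma \ref{L3.1} and Lemma \ref{L2.14}. Specifically, pick a parameter element $x \in \mathfrak{b}(M)^3$ and set $\overline{M} = M/U_M(0)$. By Theorem \ref{T3.2} the value $\mathcal{N}_R(M)$ is computed along any $C$-system of parameters starting with such an $x$, so $\mathcal{N}_R(M) = \mathcal{N}_R(M/xM)$ whenever $x$ begins a $C$-system (using Lemma \ref{L2.15}(iii) to see $x_2,\ldots,x_d$ remains a $C$-system of parameters of $M/xM$). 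This reduces the dimension by one while, by Lemma \ref{L3.1}, introducing an extra socle contribution from $\mathrm{Soc}(H^{i+1}_{\mathfrak m}(\overline{M}))$ at each step.

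For part (i), the plan is to prove both implications. The forward direction is Northcott's classical result (Remark \ref{R2.4}(ii)): if $M$ is Cohen-Macaulay then $U_M(0) = 0$, $\overline{M} = M$, every system of parameters is a $C$-system, and $\mathrm{ir}_M(\underline{x}) = \dim_{R/\mathfrak m}\mathrm{Soc}(H^d_{\mathfrak m}(M))$ for all of them, so the stable value is exactly this number. For the converse I would argue contrapositively: if $M$ is not Cohen-Macaulay, then $H^i_{\mathfrak m}(M) \neq 0$ for some $i < d$, and I want to show that the reduction process forces $\mathcal{N}_R(M)$ to strictly exceed $\dim_{R/\mathfrak m}\mathrm{Soc}(H^d_{\mathfrak m}(M))$. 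The surjection $\mathrm{Soc}(M/(\underline{x})M) \twoheadrightarrow \mathrm{Soc}(H^d_{\mathfrak m}(M))$ from Lemma \ref{L2.13} gives $\mathcal{N}_R(M) \geq \dim_{R/\mathfrak m}\mathrm{Soc}(H^d_{\mathfrak m}(M))$ always, and the strictness should come from a nonzero lower local cohomology contributing extra socle dimension that Lemma \ref{L3.1} tracks explicitly.

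For part (ii), the inequality $\mathcal{N}_R(M) \geq \sum_{i=0}^{d}\dim_{R/\mathfrak m}\mathrm{Soc}(H^i_{\mathfrak m}(M))$ and the sequentially Cohen-Macaulay characterization are the substantive content. The plan is to induct on $d$: apply the reduction $\mathcal{N}_R(M) = \mathcal{N}_R(M/xM)$ together with Lemma \ref{L3.1}, which says
\[
\dim_{R/\mathfrak m}\mathrm{Soc}(H^i_{\mathfrak m}(M/xM)) = \dim_{R/\mathfrak m}\mathrm{Soc}(H^i_{\mathfrak m}(M)) + \dim_{R/\mathfrak m}\mathrm{Soc}(H^{i+1}_{\mathfrak m}(\overline{M}))
\]
for $i \leq d-1$. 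Summing the inductive bound for $M/xM$ over all $i$ and comparing with the target sum for $M$, the cross terms involving $\mathrm{Soc}(H^{i+1}_{\mathfrak m}(\overline{M}))$ are nonnegative, giving the inequality; equality forces each such term to vanish, which by the dimension-filtration characterization should be equivalent to the successive quotients $D_i/D_{i-1}$ being Cohen-Macaulay, i.e. $M$ sequentially Cohen-Macaulay. The comparison with Corollary (the formula $\sum_{i=0}^d \dim_{R/\mathfrak m}\mathrm{Soc}(H^i_{\mathfrak m}(M))$ for sequentially Cohen-Macaulay $M$) confirms equality in that case.

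The main obstacle I anticipate is the equality analysis in part (ii): translating the vanishing of all the accumulated $\mathrm{Soc}(H^{j}_{\mathfrak m}(\overline{M}))$ contributions back into the structural statement that $M$ is sequentially Cohen-Macaulay. One direction (sequentially Cohen-Macaulay $\Rightarrow$ equality) follows by matching against the known formula, but the reverse requires carefully relating the unmixed component $U_M(0) = D_{t-1}$ and its local cohomology to the Cohen-Macaulayness of each filtration quotient, and ensuring the inductive passage to $M/xM$ respects the dimension filtration. Handling the depth-zero case (where one first factors out $H^0_{\mathfrak m}(M)$ as in Lemma \ref{L2.14}) so that the counting of the $i=0$ socle term is consistent will require care.
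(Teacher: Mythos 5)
Your overall strategy coincides with the paper's: induct on $d$, use $\mathcal{N}_R(M)=\mathcal{N}_R(M/x_dM)$ for the last element $x_d\in\mathfrak b(M)^3$ of a $C$-system of parameters, and track socle dimensions via Lemma \ref{L3.1}. (The paper dispatches (i) even more cheaply than you propose: it is a formal consequence of (ii), since $\mathcal{N}_R(M)=\dim_{R/\frak m}\mathrm{Soc}(H^d_{\frak m}(M))$ forces $\mathrm{Soc}(H^i_{\frak m}(M))=0$, hence $H^i_{\frak m}(M)=0$, for all $i<d$; no separate contrapositive argument is needed.) The inequality in (ii) goes through exactly as you describe: summing Lemma \ref{L3.1} over $i\le d-1$ gives $\sum_{i=0}^{d-1}\dim\mathrm{Soc}(H^i_{\frak m}(M/x_dM))=\sum_{i=0}^{d}\dim\mathrm{Soc}(H^i_{\frak m}(M))+\sum_{i=0}^{d-1}\dim\mathrm{Soc}(H^i_{\frak m}(M/U_M(0)))$, and the discarded term is nonnegative.

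The genuine gap is in the equality analysis, precisely where you flag an ``anticipated obstacle'' without resolving it. Two points. First, your assertion that vanishing of the cross terms ``should be equivalent to the successive quotients $D_i/D_{i-1}$ being Cohen-Macaulay'' is not what one step of the reduction gives: the extra term at a single step controls only the top quotient $M/D_{t-1}=M/U_M(0)$; the lower quotients are invisible at that stage and must come from the inductive hypothesis that $M/x_dM$ is sequentially Cohen-Macaulay. Second, and this is the real missing content, one must then prove the implication: if $M/x_dM$ is sequentially Cohen-Macaulay and $M/D_{t-1}$ is Cohen-Macaulay, then $M$ is sequentially Cohen-Macaulay. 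This is the Claim the paper isolates and proves. Its proof is not automatic: one first uses Cohen-Macaulayness of $\overline M=M/D_{t-1}$ to get the exact sequence $0\to D_{t-1}\to M/x_dM\to \overline M/x_d\overline M\to 0$, identifies the dimension filtration of $M/x_dM$ from this, and then splits into the cases $\dim D_{t-1}<d-1$ and $\dim D_{t-1}=d-1$; in the second case the filtration of $M/x_dM$ loses a step and an additional local cohomology argument on $0\to D_{t-1}/D_{t-2}\to M/(D_{t-2}+x_dM)\to \overline M/x_d\overline M\to 0$ is needed to recover that $D_{t-1}/D_{t-2}$ is Cohen-Macaulay. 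Until you supply this bridge, the ``only if'' direction of the equality statement in (ii) is not established.
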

\begin{proof} (i) follows from (ii).\\
(ii) Induction on $d$. The case $d=1$ is not difficult since $M$ is generalized Cohen-Macaulay. For $d>1$, let $\mathcal{D}: D_0 \subseteq D_1 \subseteq \cdots \subseteq D_t =
M$ be the dimension filtration of $M$. Notice that $D_{t-1} = U_M(0)$ the unmixed component of $M$. Let $\underline{x} = x_1, \ldots, x_d$ be a $C$-system of parameters of $M$. By Lemma \ref{L3.1} we have
$$\sum_{i=0}^{d-1}\dim_{R/\frak m}\mathrm{Soc}(H^i_{\frak m}(M/x_dM)) = \sum_{i=0}^{d}\dim_{R/\frak m}\mathrm{Soc}(H^i_{\frak m}(M)) + \sum_{i=0}^{d-1}\dim_{R/\frak m}\mathrm{Soc}(H^i_{\frak m}(M/D_{t-1})).$$
Thus $\sum_{i=0}^{d-1}\dim_{R/\frak m}\mathrm{Soc}(H^i_{\frak m}(M/x_dM)) \ge \sum_{i=0}^{d}\dim_{R/\frak m}\mathrm{Soc}(H^i_{\frak m}(M))$ and the equality occurs if and only if $M/D_{t-1}$ is Cohen-Macaulay. Since $x_1, \ldots, x_{d-1}$ is also a $C$-system of parameters of $M/x_dM$, we have
$$\mathcal{N}_R(M) = \mathcal{N}_R(M/x_dM) \ge \sum_{i=0}^{d-1}\dim_{R/\frak m}\mathrm{Soc}(H^i_{\frak m}(M/x_dM)) \ge \sum_{i=0}^{d}\dim_{R/\frak m}\mathrm{Soc}(H^i_{\frak m}(M)),$$
where the first inequality follows from the induction. Therefore $\mathcal{N}_R(M) \ge \sum_{i=0}^{d}\dim_{R/\frak m}\mathrm{Soc}(H^i_{\frak m}(M))$ and the equality occurs if and only if $M/x_dM$ is sequentially Cohen-Macaulay and $M/D_{t-1}$ is Cohen-Macaulay. The last condition is equivalent to $M$ is sequentially Cohen-Macaulay by the following claim.\\
{\bf Claim.} $M$ is sequentially Cohen-Macaulay if and only if $M/x_dM$ is sequentially Cohen-Macaulay and $M/D_{t-1}$ is Cohen-Macaulay.\\
{\it Proof of Claim.} "$\Rightarrow$" follows from \cite[Lemma 6.4]{CC07-1} and the definition of sequentially Cohen-Macaulay module. \\
"$\Leftarrow$", since $\overline{M} :=M/D_{t-1}$ is Cohen-Macaulay we have the following short exact sequence
$$0 \to D_{t-1} \to M/x_dM \to \overline{M}/x_d\overline{M} \to 0.$$
Thus the dimension filtration of $M/x_dM$ is either
$$D_0 \cong \frac{D_0+x_dM}{x_dM} \subseteq \cdots \subseteq D_{t-1} \cong \frac{D_{t-1}+x_dM}{x_dM} \subseteq M/x_dM,$$
if $\dim D_{t-1}<d-1$, or
$$D_0 \cong \frac{D_0+x_dM}{x_dM} \subseteq \cdots \subseteq D_{t-2} \cong \frac{D_{t-2}+x_dM}{x_dM} \subseteq M/x_dM,$$
if $\dim D_{t-1}=d-1$. If $\dim D_{t-1}<d-1$ we have $D_{i}/D_{i-1}$ is Cohen-Macaulay for all $i = 0, \ldots,t-1$ since $M/x_dM$ is sequentially Cohen-Macaulay. So $M$ is sequentially Cohen-Macaulay. Suppose $\dim D_{t-1} = d-1$, we need only to prove that $D_{t-1}/D_{t-2}$ is Cohen-Macaulay. This requirement can be proved by applying the local cohomology functor $H^i_{\frak m}(-)$ to the short exact sequence
$$0 \to D_{t-1}/D_{t-2} \to M/(D_{t-2}+x_dM) \to \overline{M}/x_d\overline{M} \to 0.$$
Therefore the claim is proved and the proof is complete.
\end{proof}
\begin{remark}\label{R3.5}\rm If $M$ is (sequentially) generalized Cohen-Macaulay modules, then it is easily seen that $\mathcal{N}_R(M)$ is just the invariants of Remark \ref{R2.4} (iii) and Theorem \ref{T2.8}. At the time of writing the authors were not able to use these equalities to characterize these classes of modules.

\end{remark}

\section{The limit value}
For each $n \ge 1$ we define
$$\alpha_n(M) = \min \{\mathrm{ir}_M(\frak q) \,|\, \frak q \subseteq \frak m^n \text{ a system of parameters of } M\}.$$
We have $\{\alpha_n(M)\}_{n\ge 1}$ is a non-decreasing sequence and bounded above by $\mathcal{N}_R(M)$ since we can choose a $C$-system of parameters which is contained in $\frak m^n$ for all $n \ge 1$. Thus the following is well defined.
\begin{definition} The limit $\alpha(M) = \lim \alpha_n(M)$ exists. We call $\alpha(M)$ the {\it limit value} of the indices of reducibility of parameter ideals of $M$
\end{definition}
We can use the limit value to characterize the Cohen-Macaulayness as follows.
\begin{theorem} For any local ring $(R, \frak m)$ we have 
$$\dim_{R/\frak m}\mathrm{Soc}(H^d_{\frak m}(M)) \le \alpha(M) \le \mathcal{N}_R(M).$$ 
Moreover $\alpha(M) = \dim_{R/\frak m}\mathrm{Soc}(H^d_{\frak m}(M))$ if and only if $M$ is Cohen-Macaulay.
\end{theorem}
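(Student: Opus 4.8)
The plan is to establish the two inequalities and the Cohen--Macaulay characterization separately, always treating $\alpha(M)$ as the non-decreasing limit $\sup_n\alpha_n(M)$. The upper bound $\alpha(M)\le\mathcal N_R(M)$ needs no new work: it is exactly the observation recorded before the definition, namely that every power $\frak m^n$ contains a $C$-system of parameters, such a system computes $\mathcal N_R(M)$ by Theorem \ref{T3.2}, and hence $\alpha_n(M)\le\mathcal N_R(M)$ for all $n$.

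For the lower bound, write $s=\dim_{R/\frak m}\mathrm{Soc}(H^d_{\frak m}(M))$. The plan is to show $\mathrm{ir}_M(\frak q)\ge s$ for every parameter ideal $\frak q$ lying in a fixed high power of $\frak m$; then $\alpha_n(M)\ge s$ for $n\gg 0$ and the claim follows on passing to the limit. Fixing $\frak q=(\underline x)$, recall $H^d_{\frak m}(M)=\varinjlim_t M/(x_1^t,\dots,x_d^t)M$ with transition maps multiplication by $x_1\cdots x_d$. Since $R/\frak m$ is finitely presented, $\mathrm{Soc}(-)=\Hom(R/\frak m,-)$ commutes with this filtered colimit, giving $\mathrm{Soc}(H^d_{\frak m}(M))=\varinjlim_t\mathrm{Soc}(M/(x_1^t,\dots,x_d^t)M)$; as the colimit is finite dimensional, for $t\gg 0$ the structure map $\mathrm{Soc}(M/\frak q^{[t]}M)\to\mathrm{Soc}(H^d_{\frak m}(M))$ is surjective, so $\mathrm{ir}_M(\frak q^{[t]})\ge s$. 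Thus the bound holds for high powers of \emph{any} fixed parameter ideal. The main obstacle in this step is \emph{uniformity}: the colimit argument only yields surjectivity after replacing a fixed $\frak q$ by a power $\frak q^{[t]}$, while $\alpha_n(M)$ is an infimum over \emph{all} parameter ideals in $\frak m^n$. To upgrade this, I would fix a $k$-basis of $\mathrm{Soc}(H^d_{\frak m}(M))$, represent it by \v{C}ech cocycles of bounded level with respect to one auxiliary system of parameters, and show that any parameter ideal contained in a single power $\frak m^{n_0}$ (with $n_0$ controlled by that level) already lifts the whole socle. This is precisely where the work lies, and is what makes the bound apply to the minimum rather than merely to cofinally many $\frak q$.

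For the characterization, the implication $M$ Cohen--Macaulay $\Rightarrow\alpha(M)=s$ is immediate from Northcott's theorem (Remark \ref{R2.4}(ii)), since then $\mathrm{ir}_M(\frak q)=s$ for \emph{every} parameter ideal, so $\alpha_n(M)=s$ for all $n$. For the converse I would argue by induction on $d$ that $\alpha(M)=s$ forces $H^i_{\frak m}(M)=0$ for all $i<d$. When $d=1$ the module is generalized Cohen--Macaulay, so by Remark \ref{R2.4}(iii) every deep parameter ideal satisfies $\mathrm{ir}_M(\frak q)=\dim_{R/\frak m}\mathrm{Soc}(H^0_{\frak m}(M))+s$; hence $\alpha(M)=s$ gives $\mathrm{Soc}(H^0_{\frak m}(M))=0$, i.e. $H^0_{\frak m}(M)=0$ and $M$ is Cohen--Macaulay. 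For $d>1$ I would pick a deep parameter element $x\in\frak b(M)^3$ and combine three ingredients: the already-proved lower bound applied to $M/xM$, the surjection $\mathrm{Soc}(H^{d-1}_{\frak m}(M/xM))\twoheadrightarrow\mathrm{Soc}(H^d_{\frak m}(M))$ of Lemma \ref{L2.14}, and the splitting formula of Lemma \ref{L3.1}. Granting $\alpha(M)=\alpha(M/xM)$, one obtains $s\le\dim_{R/\frak m}\mathrm{Soc}(H^{d-1}_{\frak m}(M/xM))\le\alpha(M/xM)=s$, so by induction $M/xM$ is Cohen--Macaulay and the surjection of Lemma \ref{L2.14} is an isomorphism. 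Substituting back into Lemma \ref{L3.1} then kills $\mathrm{Soc}(H^i_{\frak m}(M))$ for every $i<d$ (using that a nonzero Artinian module has nonzero socle), whence $M$ is Cohen--Macaulay.

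The step I expect to be the genuine obstacle—beyond the uniformity issue above—is the identity $\alpha(M)=\alpha(M/xM)$ for a deep parameter element $x$. For the stable value this is easy: one completes $x_1,\dots,x_{d-1}$ to the system $x_1,\dots,x_{d-1},x$ and invokes Theorem \ref{T3.2}. But the limit value is an infimum over \emph{all} deep parameter ideals of $M$, so one must compare this infimum with the one for $M/xM$. I would handle this by showing that deep parameter ideals of $M$ having $x$ among their generators are cofinal enough to compute $\alpha(M)$, using Lemma \ref{L2.15} to keep the relevant systems $C$-systems after cutting by $x$, together with the monotonicity of $\{\alpha_n\}$. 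Once this reduction is secured, both the lower bound and the Cohen--Macaulay characterization follow from the inductions sketched above.
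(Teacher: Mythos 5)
Your upper bound and the implication ``$M$ Cohen--Macaulay $\Rightarrow \alpha(M)=\dim_{R/\frak m}\mathrm{Soc}(H^d_{\frak m}(M))$'' match the paper and are fine. But the paper's proof of the remaining assertions is essentially by citation --- the lower bound is exactly \cite[Lemma 3.12]{GS03} (for \emph{all} parameter ideals $\frak q$ contained in a large enough power of $\frak m$, the canonical map $M/\frak qM=H^d(\frak q;M)\to H^d_{\frak m}(M)$ is surjective on socles), and the converse of the characterization is \cite[Theorem 5.2]{CQT15} --- and you have correctly located both difficulties without resolving either, so the proposal has genuine gaps. For the lower bound, your colimit argument only yields socle surjectivity for the Frobenius-type powers $\frak q^{[t]}$ with $t\gg 0$ depending on $\frak q$; since $\alpha_n(M)$ is a minimum over \emph{all} parameter ideals in $\frak m^n$, controlling cofinally many of them proves nothing, and the uniform statement you defer to a sketch (``this is precisely where the work lies'') is exactly the content of the Goto--Sakurai lemma. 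You would need to either prove it or cite it.

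The more serious gap is the identity $\alpha(M)=\alpha(M/xM)$ for a fixed deep parameter element $x$, on which your induction for the converse rests and which you explicitly grant rather than prove. The inequality your argument actually consumes is $\alpha(M/xM)\le\alpha(M)$, and the cofinality device you propose points the wrong way: lifting a parameter ideal $\frak q'$ of $M/xM$ to $(x)+\frak q'$ gives at best $\alpha_{n_0}(M)\le\alpha(M/xM)$, where $n_0$ is the $\frak m$-adic order of $x$ (a fixed $x$ eventually fails to lie in $\frak m^n$), whereas bounding $\alpha(M/xM)$ above by $\alpha(M)$ would require the minimizing parameter ideals of $M$ in deep powers of $\frak m$ to admit a suitable deep element among their generators --- something you do not address and which is not clear. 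This is precisely the difference between the stable value, where Theorem \ref{T3.2} lets you complete any deep element to a $C$-system, and the limit value, which is an infimum over all parameter ideals and does not obviously descend modulo a fixed $x$. The paper sidesteps the entire issue by quoting \cite[Theorem 5.2]{CQT15}; a self-contained argument along your lines would have to establish that descent, or find another route to the converse.
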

\begin{proof} The last inequality follows from the above discussion. The first inequality follows from of a result of Goto and Sakurai \cite[Lemma 3.12]{GS03} which claims that for all parameter ideals contained in a large enough power of $\frak m$ the canonical map
$$M/\frak qM = H^d(\frak q; M) \to H^d_{\frak m}(M)$$
induced a surjective map on the socle modules $\mathrm{Soc}(M/\frak qM) \twoheadrightarrow \mathrm{Soc}(H^d_{\frak m}(M))$.
The last assertion follows from \cite[Theorem 5.2]{CQT15}.
\end{proof}
By Remark \ref{R2.4} (iii) we have the following.
\begin{proposition} Let $M$ be a generalized Cohen-Macaulay module. Then we have
  $$\alpha(M) = \sum_{i=0}^{d} \binom{d}{i} \dim_{R/\frak m}\mathrm{Soc}(H^i_{\frak m}(M)).$$
\end{proposition}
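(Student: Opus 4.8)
The plan is to show that the non-decreasing sequence $\{\alpha_n(M)\}$ stabilizes at the asserted value, and then simply pass to the limit. The only serious input is Remark \ref{R2.4} (iii), which evaluates $\mathrm{ir}_M(\frak q)$ for every sufficiently deep parameter ideal of a generalized Cohen-Macaulay module; the rest is bookkeeping about minima and limits.

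First I would produce the threshold exponent. Because $M$ is generalized Cohen-Macaulay, each local cohomology module $H^i_{\frak m}(M)$ with $i<d$ has finite length and is therefore annihilated by a power of $\frak m$; choose $n_0 \ge 1$ with $\frak m^{n_0} H^i_{\frak m}(M) = 0$ for all $i = 0, \ldots, d-1$. Set $S = \sum_{i=0}^{d} \binom{d}{i} \dim_{R/\frak m}\mathrm{Soc}(H^i_{\frak m}(M))$ for the target quantity.

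Next I would invoke Remark \ref{R2.4} (iii): every parameter ideal $\frak q \subseteq \frak m^{2n_0}$ has $\mathrm{ir}_M(\frak q) = S$. Consequently, for any $n \ge 2n_0$, each system of parameters generating an ideal $\frak q \subseteq \frak m^{n} \subseteq \frak m^{2n_0}$ contributes the single value $\mathrm{ir}_M(\frak q) = S$ to the set over which $\alpha_n(M)$ is minimized; hence $\alpha_n(M) = S$ for every $n \ge 2n_0$.

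Finally, since $\{\alpha_n(M)\}_{n \ge 1}$ is non-decreasing (as recorded just before the definition of $\alpha(M)$) and is eventually the constant $S$, its limit $\alpha(M)$ equals $S$, which is exactly the claimed formula. I do not expect any genuine obstacle here: the whole argument reduces to the observation that $\mathrm{ir}_M$ is \emph{constant} on the deep parameter ideals supplied by Remark \ref{R2.4} (iii), so both the minimum $\alpha_n$ and the limit $\alpha$ are forced to agree with that common value.
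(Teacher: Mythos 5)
Your proof is correct and is exactly the paper's argument: the paper proves this proposition simply by citing Remark \ref{R2.4} (iii), and your write-up just makes explicit the (routine) passage from ``$\mathrm{ir}_M(\frak q)$ is constant equal to $S$ for all $\frak q \subseteq \frak m^{2n_0}$'' to ``$\alpha_n(M)=S$ for $n \ge 2n_0$, hence $\alpha(M)=S$.'' No gaps; nothing further to add.
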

The number $\alpha(M)$ is still mysterious to us for non-generalized Cohen-Macaulay modules. Below we compute this invariant in a very simple example.
\begin{example}
Let $R = k[[X,Y,Z]]/(XY, XZ) = K[[x,y,z]]$, where $K$ is a field. We have $R$ is a sequentially Cohen-Macaulay ring of dimension two with the Cohen-Macaulay filtration $0 \subseteq (x) \subseteq R$. Let $\frak q$ be a parameter ideal of $R$. Since $\overline{R} = R/(x)$ is Cohen-Macaulay, the short exact sequence
$$0 \to (x) \to R \to \overline{R} \to 0$$ 
induces the short exact sequence
$$0 \to (x)/x\frak q \to R/\frak q \to \overline{R}/\frak q \overline{R} \to 0.$$
Therefore we have the left exact sequence of the socle modules
$$0 \to \mathrm{Soc}((x)/x\frak q) \to \mathrm{Soc}(R/\frak q) \to \mathrm{Soc}(\overline{R}/\frak q \overline{R}).$$
Notice that $(x) \cong K[[X]]$ is a DVR and $\overline{R} \cong K[[Y,Z]]$ is a regular local ring. Hence
$$\mathrm{ir}(\frak q) \le \dim_K\mathrm{Soc}((x)/x\frak q) + \dim_K\mathrm{Soc}(\overline{R}/\frak q \overline{R}) = 2$$
for all $\frak q$. We use again \cite[Lemma 3.12]{GS03} to obtain that the natural map 
$$\mathrm{Soc}(R/\frak q) \to \mathrm{Soc}(H^2_{\frak m}(R))$$
is surjective for any $\frak q$ contained in a large enough power of $\frak m$.
On the other hand, since $\overline{R}$ is Cohen-Macaulay we have
$$\mathrm{Soc}(\overline{R}/\frak q \overline{R}) \cong \mathrm{Soc}(H^2_{\frak m}(\overline{R})) \cong \mathrm{Soc}(H^2_{\frak m}(R)).$$ 
Therefore the natural map $\mathrm{Soc}(R/\frak q) \to \mathrm{Soc}(\overline{R}/\frak q \overline{R})$ is surjective for all $\frak q$ contained in a large enough power of $\frak m$. Hence $\alpha(R) =2$.
\end{example}

We list here some natural questions about the limit value.
\begin{question} Does $\alpha(M) = \mathcal{N}_R(M)$ for every finitely generated $R$-module $M$?
\end{question}
\begin{question} For every finitely generated $R$-module $M$,
does $\sum_{i=0}^d\dim_{R/\frak m}\mathrm{Soc}(H^i_{\frak m}(M)) \le \alpha(M)$, and the equality occurs if and only if $M$ is sequentially Cohen-Macaulay?
\end{question}

\section{The three dimensional case}
In this section we compute the stable value $\mathcal{N}_R(M)$ when $M$ is unmixed, $\dim M =  3$ and $\mathrm{depth}(M)=2$. Note that if $M$ is generalized Cohen-Macaulay then it satisfies the Serre condition $S_2$. We also refer to \cite{HH94} for the $S_2$-ification.
\begin{lemma}\label{L5.1} Let $S$ be the $S_2$-ification of $M$. Suppose $M$ is not generalized Cohen-Macaulay. Then $D:=S/M$ is a Cohen-Macaulay module of dimension one.
\end{lemma}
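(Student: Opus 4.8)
The plan is to analyze the short exact sequence
$$0 \to M \to S \to D \to 0$$
through local cohomology, feeding in the two structural facts about the $S_2$-ification recorded in \cite{HH94}: first, that $S$ is again unmixed of dimension $3$ and satisfies Serre's condition $S_2$; and second, that the quotient $D=S/M$ is supported in codimension at least two, so that $\dim D \le \dim M - 2 = 1$. I would open by recording the relevant vanishing. Since $\depth M = 2$ we have $H^0_{\frak m}(M) = H^1_{\frak m}(M) = 0$, and since $S$ satisfies $S_2$ with $\dim S = 3$ we get $\depth S \ge \min(2,3) = 2$, hence $H^0_{\frak m}(S) = H^1_{\frak m}(S) = 0$.

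Next I would read off $\depth D$. In the long exact cohomology sequence attached to the displayed short exact sequence, the three-term piece $H^0_{\frak m}(S) \to H^0_{\frak m}(D) \to H^1_{\frak m}(M)$ has both outer terms zero by the previous step, so $H^0_{\frak m}(D) = 0$. Thus, provided $D \neq 0$, the module $D$ has no finite-length submodule and $\depth D \ge 1$. Combining this with $\dim D \le 1$ leaves only two possibilities: either $D = 0$, or $\dim D = \depth D = 1$; the second alternative is precisely the assertion that $D$ is Cohen--Macaulay of dimension one. So the whole lemma reduces to excluding $D = 0$.

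It remains to rule out $D = 0$, and this is the only place the non--generalized--Cohen--Macaulay hypothesis is needed. If $D=0$ then $M = S$ satisfies $S_2$, and I claim that an \emph{unmixed} module of dimension $3$ satisfying $S_2$ is automatically generalized Cohen--Macaulay, contradicting the hypothesis. To see the claim, localize at any nonmaximal $\frak p \in \Supp M$: because $M$ is unmixed and $R$ is catenary (it is a homomorphic image of a Cohen--Macaulay ring), $\dim M_{\frak p} = 3 - \dim R/\frak p \le 2$, while $S_2$ gives $\depth M_{\frak p} \ge \min(2,\dim M_{\frak p}) = \dim M_{\frak p}$; hence $M_{\frak p}$ is Cohen--Macaulay. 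So $M$ is equidimensional and Cohen--Macaulay on the punctured spectrum, which forces $H^i_{\frak m}(M)$ to have finite length for all $i<3$, i.e.\ $M$ is generalized Cohen--Macaulay. This contradiction gives $D \neq 0$ and finishes the proof. The routine part is the cohomology chase; the step requiring care is marshalling the structural input on the $S_2$-ification (that $S$ is unmixed, is $S_2$, and that $\dim(S/M)\le 1$) together with the localization argument showing that $S_2$ plus unmixedness already implies generalized Cohen--Macaulayness in dimension three. That structural bookkeeping, rather than any delicate estimate, is the main obstacle.
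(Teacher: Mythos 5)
Your proof is correct and takes essentially the same route as the paper's: $\dim D \le 1$ is obtained from the vanishing of $D$ at all primes $\frak p$ with $\dim R/\frak p \ge 2$ (which rests on unmixedness of $M$), and $\depth D \ge 1$ comes from the local cohomology sequence of $0 \to M \to S \to D \to 0$ together with $\depth M = 2$ and $\depth S \ge 2$. The only difference is that you explicitly rule out $D = 0$ using the non-generalized-Cohen-Macaulay hypothesis (via Cohen-Macaulayness on the punctured spectrum), a point the paper's proof leaves implicit.
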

\begin{proof} For each $\frak p \in \mathrm{Spec}(R)$ such that $M_{\frak p}$ is $S_2$ we have $S_{\frak p} = M_{\frak p}$ so $D_{\frak p} = 0$. Since $M$ is unmixed and $\dim M = 3$ we have $M_{\frak p}$ is Cohen-Macaulay for all $\frak p \in \mathrm{Supp}(M)$ such that $\dim R/\frak p \ge 2$. Therefore $\dim R/\frak p \le 1$ for all $\frak p \in \mathrm{Ass}D$. Moreover the short exact sequence
$$0 \to M \to S \to D \to 0$$
induces the exact sequence
$$0 \to H^0_{\frak m}(D) \to H^1_{\frak m}(M) \to \cdots.$$
But $\mathrm{depth}(M)  = 2$ so $H^0_{\frak m}(D) = 0$. Hence $\dim D = 1$ and $D$ is Cohen-Macaulay.
\end{proof}
Let $\underline{x} = x_1, x_2, x_3$ be a $C$-system of parameters of $M$. Then we have $M/x_1M$ is unmixed by $\mathrm{depth}(M)=2$ and \cite[Remark 3.3]{MQ16}. Thus $M/x_1M$ is generalized Cohen-Macaulay and $x_2, x_3$ is a $C$-system of parameters of $M/x_1M$.

\begin{remark}\label{R5.2}\rm In fact we have $\mathrm{Ass}D = \mathcal{F}(M)$ where
  $$\mathcal{F}(M) = \{\mathfrak{p}\in \mathrm{Spec}(R)\,|\, \dim M_{\mathfrak{p}}>
1=\mathrm{depth}M_{\mathfrak{p}},\, \mathfrak{p} \neq \mathfrak{m}
\}$$ is a finite set (cf. \cite[Lemma 3.2]{GN01}). Moreover $\mathcal{F}(M) = \{\frak p \in \mathrm{Ass}(M/x_3M)\,|\, \dim R/\frak p \le 1\}$ by \cite[Proposition 4.14]{CQ16}. Thus $x_1$ is a parameter element of $D$.
\end{remark}

The following is the main result of this section.
\begin{theorem}\label{T5.3} Let $M$ be an unmixed module of dimension $3$ and $\mathrm{depth}(M) = 2$. Let $S$ be the $S_2$-ification of $M$, we have
$$\mathcal{N}_R(M) = 2\dim_{R/\frak m}\mathrm{Soc}(H^2_{\frak m}(M)) + \dim_{R/\frak m}\mathrm{Soc}(H^3_{\frak m}(M)) + \dim_{R/\frak m}\mathrm{Soc}(H^2_{\frak m}(S)).$$
\end{theorem}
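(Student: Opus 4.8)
The plan is to reduce to dimension two, apply the generalized Cohen--Macaulay formula there, and then transport the resulting socle dimensions back to $M$ and $S$ through the defining sequence of the $S_2$-ification. First fix a $C$-system of parameters $\underline x=x_1,x_2,x_3$ of $M$; by Theorem \ref{T3.2}, $\mathcal N_R(M)=\dim_{R/\frak m}\mathrm{Soc}(M/(\underline x)M)$ is independent of it, and by Lemma \ref{L2.15}(ii) I may pass to a power of $x_1$, so I arrange $x_1\in\frak m^N$ with $N\gg0$. Since $M/(\underline x)M=(M/x_1M)/(x_2,x_3)(M/x_1M)$ and $x_2,x_3$ is a $C$-system of parameters of $M/x_1M$ by Lemma \ref{L2.15}(iii), I get $\mathcal N_R(M)=\mathcal N_R(M/x_1M)$. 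By the discussion preceding Remark \ref{R5.2} the module $M/x_1M$ is generalized Cohen--Macaulay of dimension two, and $H^0_{\frak m}(M/x_1M)=0$ since $\operatorname{depth}M=2$ forces $H^0_{\frak m}(M)=H^1_{\frak m}(M)=0$; hence Remark \ref{R2.4}(iii) (cf.\ Remark \ref{R3.5}) gives
$$\mathcal N_R(M)=2\dim_{R/\frak m}\mathrm{Soc}(H^1_{\frak m}(M/x_1M))+\dim_{R/\frak m}\mathrm{Soc}(H^2_{\frak m}(M/x_1M)).$$

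Since $x_1$ is $M$-regular, the sequence $0\to M\xrightarrow{x_1}M\to M/x_1M\to0$ together with $H^1_{\frak m}(M)=0$ yields $H^1_{\frak m}(M/x_1M)\cong(0:_{H^2_{\frak m}(M)}x_1)$ and a short exact sequence
$$0\to H^2_{\frak m}(M)/x_1H^2_{\frak m}(M)\to H^2_{\frak m}(M/x_1M)\to(0:_{H^3_{\frak m}(M)}x_1)\to0.$$
Because $\mathrm{Soc}(N)=(0:_N\frak m)=\mathrm{Soc}(0:_Nx_1)$ for every $N$ (as $x_1\in\frak m$), the first isomorphism already gives $\dim_{R/\frak m}\mathrm{Soc}(H^1_{\frak m}(M/x_1M))=\dim_{R/\frak m}\mathrm{Soc}(H^2_{\frak m}(M))$, which accounts for the term $2\dim_{R/\frak m}\mathrm{Soc}(H^2_{\frak m}(M))$ in the target formula.

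For the remaining socle I use $0\to M\to S\to D\to0$: its long exact sequence, with $H^1_{\frak m}(S)=0$ (as $\operatorname{depth}S\ge2$) and $H^2_{\frak m}(D)=0$ ($\dim D=1$ by Lemma \ref{L5.1}), gives $0\to H^1_{\frak m}(D)\to H^2_{\frak m}(M)\to H^2_{\frak m}(S)\to0$. Because $S$ satisfies $(S_2)$ and is equidimensional of dimension three it is Cohen--Macaulay on the punctured spectrum, so $H^2_{\frak m}(S)$ has finite length and is killed by $x_1\in\frak m^N$; hence $x_1H^2_{\frak m}(M)\subseteq H^1_{\frak m}(D)$. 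Conversely $x_1H^1_{\frak m}(D)=H^1_{\frak m}(D)$, since $x_1$ is a parameter element of $D$ (Remark \ref{R5.2}) and multiplication by a parameter is surjective on the top local cohomology of the one-dimensional $D$. Thus $x_1H^2_{\frak m}(M)=H^1_{\frak m}(D)$ and $H^2_{\frak m}(M)/x_1H^2_{\frak m}(M)\cong H^2_{\frak m}(S)$. Substituting this into the sequence above and applying $\mathrm{Soc}(-)$, using $\mathrm{Soc}(0:_{H^3_{\frak m}(M)}x_1)=\mathrm{Soc}(H^3_{\frak m}(M))$ and the surjection $\mathrm{Soc}(H^2_{\frak m}(M/x_1M))\twoheadrightarrow\mathrm{Soc}(H^3_{\frak m}(M))$ of Lemma \ref{L2.14}, I obtain
$$0\to\mathrm{Soc}(H^2_{\frak m}(S))\to\mathrm{Soc}(H^2_{\frak m}(M/x_1M))\to\mathrm{Soc}(H^3_{\frak m}(M))\to0,$$
so $\dim_{R/\frak m}\mathrm{Soc}(H^2_{\frak m}(M/x_1M))=\dim_{R/\frak m}\mathrm{Soc}(H^2_{\frak m}(S))+\dim_{R/\frak m}\mathrm{Soc}(H^3_{\frak m}(M))$. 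Inserting the two socle identities into the formula of the first paragraph proves the theorem; if $M$ is itself generalized Cohen--Macaulay then $S=M$, $D=0$, and the same computation (or directly Remark \ref{R2.4}(iii)) applies.

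The decisive point is the identity $x_1H^2_{\frak m}(M)=H^1_{\frak m}(D)$ together with the right-exactness of the socle sequence. The former depends on cleanly separating the finite-length part $H^2_{\frak m}(S)$ (annihilated by a high power of $\frak m$) from the one-dimensional part $H^1_{\frak m}(D)$ (on which a parameter acts surjectively); the latter is exactly Lemma \ref{L2.14} applied to the connecting homomorphism $H^2_{\frak m}(M/x_1M)\to H^3_{\frak m}(M)$, and one should verify that this connecting map is the map $\gamma$ occurring there.
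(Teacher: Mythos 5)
Your proof is correct and follows essentially the same route as the paper's: reduce to the generalized Cohen--Macaulay module $M/x_1M$, apply the formula of Remark \ref{R2.4}(iii) together with Lemma \ref{L2.14}, and identify $H^2_{\frak m}(M)/x_1H^2_{\frak m}(M)$ with $H^2_{\frak m}(S)$ via the sequence $0\to M\to S\to D\to 0$. The only (harmless) deviation is in proving that last identification: the paper chooses $\underline{x}$ to be a standard system of parameters of $S$ and invokes the snake lemma, while you take $x_1\in\frak m^N$ with $N\gg 0$ to kill the finite-length module $H^2_{\frak m}(S)$ and deduce $x_1H^2_{\frak m}(M)=H^1_{\frak m}(D)$ by a direct two-inclusion argument.
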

\begin{proof} If $M$ is generalized Cohen-Macaulay, we have $S = M$ and the assertion follows from Remark \ref{R2.4} (iii). Suppose $M$ is not generalized Cohen-Macaulay. Choose a $C$-system of parameters $\underline{x} = x_1, x_2, x_3$ of $M$ such that it is also a standard system of parameters of $S$. We have
$$\mathcal{N}_R(M) = \mathrm{ir}_M(\underline{x})=\mathrm{ir}_{M/x_1M}((x_2,x_3)) = \sum_{i=0}^2 \binom{2}{i} \dim_{R/\frak m}\mathrm{Soc}(H^i_{\frak m}(M/x_1M))$$
by Remarks \ref{R2.4} and \ref{R3.5}.\\
On the other hand, applying the local cohomology functor to the short exact sequence
$$0 \to M \overset{x_1}{\to} M \to M/x_1M \to 0$$
we have $H^0_{\frak m}(M/x_1M) = 0$ and $H^1_{\frak m}(M/x_1M) \cong 0:_{H^2_{\frak m}(M)}x_1$ and the exact sequence
$$0 \to H^2_{\frak m}(M)/x_1 H^2_{\frak m}(M) \to H^2_{\frak m}(M/x_1M) \to H^3_{\frak m}(M) \to \cdots.$$
Thus $\dim_{R/\frak m}\mathrm{Soc}(H^1_{\frak m}(M/x_1M)) = \dim_{R/\frak m}\mathrm{Soc}(H^2_{\frak m}(M))$. We need only to prove that
$$\dim_{R/\frak m}\mathrm{Soc}(H^2_{\frak m}(M/x_1M)) = \dim_{R/\frak m}\mathrm{Soc}(H^3_{\frak m}(M)) + \dim_{R/\frak m}\mathrm{Soc}(H^2_{\frak m}(S)).$$
By Lemma \ref{L2.14} we have the following exact sequence
$$0 \to \mathrm{Soc}(H^2_{\frak m}(M)/x_1 H^2_{\frak m}(M)) \to \mathrm{Soc}(H^2_{\frak m}(M/x_1M)) \to \mathrm{Soc}(H^3_{\frak m}(M)) \to 0.$$
Therefore the assertion follows from the following claim.\\
{\bf Claim.} $H^2_{\frak m}(M)/x_1 H^2_{\frak m}(M) \cong H^2_{\frak m}(S)$.\\
{\it Proof of Claim.} Since $D= S/M$ is a Cohen-Macaulay module of dimension one (cf. Lemma \ref{L5.1}), the short exact sequence
$$0 \to M \to S \to D \to 0$$
induces the exact sequence
$$0 \to H^1_{\frak m}(D) \to H^2_{\frak m}(M) \to H^2_{\frak m}(S) \to 0.$$
Consider the following commutative diagram
$$
\begin{CD}
0 @>>> H^1_{\frak m}(D) @>>> H^2_{\frak m}(M) @>>> H^2_{\frak m}(S) @>>> 0\\
@. @Vx_1VV @Vx_1VV @Vx_1VV \\
0 @>>> H^1_{\frak m}(D) @>>> H^2_{\frak m}(M) @>>> H^2_{\frak m}(S) @>>> 0.\\
\end{CD}
$$
By Remark \ref{R5.2}, $\dim D/x_1D = 0$ thus the map $H^1_{\frak m}(D) \overset{x_1}{\to} H^1_{\frak m}(D)$ is surjective. Moreover since $\underline{x}$ is a standard system of parameter of $S$, $x_1H^2_{\frak m}(S) = 0$. By the Snake lemma we have $H^2_{\frak m}(M)/x_1 H^2_{\frak m}(M) \cong H^2_{\frak m}(S)$. This proves the claim.\\
\noindent The proof is complete.
\end{proof}

\begin{example} Let $T = K[[a,b,c,d,e]]$ be the formal power series ring over a field $K$. Let $R = T/((a,b) \cap (c,d))$. It is easy to check that $R$ is unmixed of $\dim R = 3$ and $\depth (R) = 2$. The $S_2$-ification of $R$ is
$$S = T/(a,b) \oplus T/(c,d).$$
We have $S$ is Cohen-Macaulay and $D = S/R \cong T/(a,b,c,d)$. By the short exact sequence
$$0 \to R \to S \to D \to 0$$
one can compute that $\dim_K\mathrm{Soc}(H^2_{\frak m}(R)) = 1$ and $\dim_K\mathrm{Soc}(H^3_{\frak m}(R)) = 2$. Therefore $\mathcal{N}(R) = 4$.
\end{example}


\begin{thebibliography}{99}

\bibitem{BH98} W. Bruns and J. Herzog, \emph{Cohen-Macaulay rings}, Cambridge University Press (Revised
edition), 1998.

\bibitem{CC07-1} N.T. Cuong and D.T. Cuong, \emph{dd-sequences and partial Euler-Poincare characteristics of Koszul complex}, J. Algebra Appl. {\bf 6} (2007),  207--231.

\bibitem{CC07} N.T. Cuong and D.T. Cuong, \emph{On sequentially
Cohen-Macaulay modules}, Kodai Math. J. {\bf 30} (2007), 409--428.


\bibitem{CC15} N.T. Cuong and D.T. Cuong, \emph{Local cohomology annihilators and Macaulayfication}, Acta Math. Vietnam. {\bf 42} (2018), 37--60.

\bibitem{CN03} N.T. Cuong and L.T. Nhan, \emph{Pseudo Cohen-Macaulay and
pseudo generalized Cohen-Macaulay modules}, J. Algebra {\bf 267}
(2003), 156--177.

\bibitem{CQ11} N.T. Cuong and P.H. Quy, \emph{A splitting theorem for local cohomology and its applications}, J. Algebra {\bf 331} (2011),
512--522.

\bibitem{CQ16} N. T.  Cuong and  P. H. Quy, \emph{the structure of finitely generated modules over quotients of Cohen-Macaulay local rings}, Preprint, Arxiv: 1612.07638.

\bibitem{CQT15} N.T. Cuong, P.H. Quy and H.L. Truong, \emph{On the index of reducibility in Noetherian modules}, J. Pure Appl. Algebra {\bf 219} (2015),
4510--4520.

\bibitem{CT08} N.T. Cuong and H.L. Truong, \emph{Asymptotic behavior of parameter ideals in generalized Cohen-Macaulay
module}, {\it J. Algebra} {\bf 320} (2008), 158--168.


\bibitem{GN01} S. Goto and Y. Nakamura, \emph{Multiplicity and tight
closures of parameters}, J. Algebra {\bf 244} (2001), 302--311.

\bibitem{GS03} S. Goto and H. Sakurai, \emph{The equality $I^2 = QI$ in Buchsbaum rings}, Rend. Sem. Univ. Padova.
{\bf 110} (2003), 25--56.

\bibitem{GS84} S. Goto and N. Suzuki, \emph{Index of reducibility of parameter ideals in a local ring}, J. Algebra {\bf 87} (1984),
53--88.

\bibitem{HH94} M. Hochster and C. Huneke,
\emph{Indecomposable canonical modules and connectedness}, Commutative algebra: syzygies, multiplicities, and birational algebra (South Hadley, MA, 1992), Contemp. Math. {\bf 159} (1994), 197--208.

\bibitem{MQ16} M. Morales and P.H. Quy, \emph{A study of the length function of generalized fractions of modules}, Proc. Edinburgh Math. Soc. {\bf 60} (2017), 721--737.

\bibitem{N21} E. Noether, \emph{Idealtheorie in Ringbereichen}, Math. Ann. {\bf 83} (1921), 24--66.

\bibitem{No57} D.G. Northcott, \emph{On irreducible ideals in local rings}, J. London Math. Soc. {\bf 32} (1957), 82--88.

\bibitem{Q12} P.H. Quy, \emph{Asymptotic behaviour of good systems of parameters of sequentially generalized Cohen-Macaulay modules}, Kodai Math.
J. {\bf 35} (2012), 576--588.

\bibitem{Q13} P.H. Quy, \emph{On the uniform bound of the index of reducibility of parameter ideals of a module whose polynomial type is at most one}, Arch. Math. {\bf 101} (2013), 469--478.

\bibitem{Sch82} P. Schenzel, \emph{Dualisierende komplexe in der lokalen algebra und Buchsbaum - ringe},
Lect. Notes in Math., Springer-Verlag Berlin - Heidelberg - New
York, 1982.

\bibitem{T13} H.L. Truong, \emph{Index of reducibility of distinguished parameter ideals and sequentially Cohen-Macaulay modules}, Proc. Amer. Math. Soc. {\bf 141} (2013), 1971--1978.
\end{thebibliography}
\end{document}